\tikzset{join/.code=\tikzset{after node path={
\ifx\tikzchainprevious\pgfutil@empty\else(\tikzchainprevious)
edge[every join]#1(\tikzchaincurrent)\fi}}}
\tikzset{>=stealth',every on chain/.append style={join},
         every join/.style={->}}
\tikzstyle{labeled}=[execute at begin node=$\scriptstyle,
\title{Random Coxeter Groups}
\author{Angelica Deibel}   
\newcommand{\aas}{asymptotically almost surely }
\newcommand{\EV}{\mathbb{E}}
\newtheorem{lemma}{Lemma}
\newtheorem{cor}{Corollary}
\newtheorem{thm}{Theorem}
\newtheorem{defn}{Definition}
\begin{document}

\maketitle

\begin{abstract}
Much is known about random right-angled Coxeter groups (i.e., right-angled Coxeter groups whose defining graphs are random graphs under the Erd\"os-R\'enyi model). In this paper, we extend this model to study random general Coxeter groups and give some results about random Coxeter groups, including some information about the homology of the nerve of a random Coxeter group and results about when random Coxeter groups are $\delta$-hyperbolic and when they have the
FC-type property.
\end{abstract}

\section{Introduction}

%\subsection{New Version}
Let $\Gamma$ be a graph with vertices $v_1,\dots,v_k$ in which each edge is labelled with an integer 
at least 2. The Coxeter group associated to $\Gamma$ is
$W_\Gamma = \langle v_1,\dots,v_k | v_i^2, (v_i,v_j)^{m_{i,j}}\rangle$,
where $m_{i,j}$ is the label of the edge connecting $v_i$ and $v_j$. If $m_{i,j} = 2$ whenever 
$v_i$ and $v_j$ are adjacent in $\Gamma$, then $W_\Gamma$ is a right-angled Coxeter group, 
and usually in this case we just think of $\Gamma$ as a graph without the edge labels. 

A lot of previous work has been done on random right-angled Coxeter groups; i.e., Coxeter groups 
associated to a random graph under the Erd\"os-R\'enyi model. In the Erd\"os-R\'enyi model, a random
graph on $n$ vertices is chosen by independently including each possible edge with probability $p(n)$.
Typically, we are interested in the asymptotic behavior as $n \to \infty$. The study of random graphs
is itself a large field with many results (see, e.g., \cite{bollobas}), and when an interesting group 
property can be identified in the defining graph of a right-angled Coxeter group, 
the results and methods of random
graph theory can be applied to give results about the groups (e.g. \cite{thickrelhyp},\cite{racghyp}). %add more

In the same way, properties of Coxeter groups which can be identified from the defining graph can 
be studied via random edge-labelled graphs under an extended version of the Erd\"os-R\'enyi model.
In this model, a random edge-labelled graph on $n$ vertices is chosen by independently choosing each
possible edge to appear labelled $m$ with probability $p_m(n)$ such that for all $n$,
$\sum\limits_{m=2}^\infty p_m(n) \leq 1$. We will sometimes denote $\sum\limits_{m=2}^\infty p_m(n)$
by $p(n)$. In this paper, we use this model to give some results about
random Coxeter groups which extend previous work on random right-angled Coxeter groups, as well
as some results about general random Coxeter groups which were not interesting in the 
right-angled case.

One area of interest in the study of random right-angled Coxeter groups has been topological 
properties of the nerve of a random right-angled Coxeter group. The nerve of a Coxeter group
$W_\Gamma$ is the simplicial complex whose vertices are the vertices of the defining graph 
$\Gamma$, and in which a simplex appears if the vertices in the simplex generate a finite 
subgroup of $W_\Gamma$. In \cite{davis}, Davis shows that the cohomology of $W_\Gamma$
with coefficients in $\mathbb{Z}W_\Gamma$ can be computed from the homology of the nerve and
certain subcomplexes of the nerve.
In the right-angled case, the nerve is just the flag complex whose 1-skeleton
is the defining graph of $\Gamma$. In \cite{kahle}, Kahle gives conditions on $p$ under which a 
random flag complex \aas has trivial $H_k$ or non-trivial $H_k$ (among other results). Then in 
\cite{daviskahle}, Davis and Kahle use these results along with additional work to determine the
cohomology of a random right-angled Coxeter group. In particular, they show that random
right-angled Coxeter groups are \aas rational duality groups; i.e., that for a random right-angled Coxeter
group $W_\Gamma$, $H^k(W_\Gamma, \mathbb{Q}W_\Gamma)$ is \aas trivial in all but one 
dimension.
In section \ref{dim}, we show that if $n^k p_2^{k\choose 2} \to \infty$ and $\frac{p-p_2}{p_2}\not\to \infty$,
then the nerve of $W_\Gamma$ \aas has trivial $H_i$ for all $i \geq k+1$. Also, in section
\ref{nontrivhom}, we show that if $n^k p_2^{k\choose 2} \to \infty$ and $np_3 \not\to 0$ and 
$np^{k+1} \to 0$, then the nerve of $W_\Gamma$ \aas has non-trivial $H_k$. This 
partially extends the work of Kahle in \cite{kahle} and is the first step towards understanding the 
cohomology of a random Coxeter group in the general setting.

Another of the various properties that have been studied in random right-angled Coxeter groups is 
hyperbolicity. A group is hyperbolic if its Cayley graph has the property that for some $\delta$, 
all triangles are ``$\delta$-slim''; i.e., the $\delta$-neighborhood of the union of two sides of a 
geodesic triangle in the Cayley graph contains the third side. (This notion of hyperbolicity is different
and not equivalent to a Coxeter group being a hyperbolic reflection group.)
In \cite{racghyp}, Charney and Farber give conditions under which right-angled 
Coxeter groups are \aas hyperbolic or not. In section \ref{hyp}, we give conditions under which
 $W_\Gamma$ is \aas hyperbolic, 
and we also give conditions under which $W_\Gamma$ is \aas not hyperbolic.

The study of random general Coxeter groups also produces some questions which were not 
interesting in the right-angled case. One example of this is the FC-type property. We say a Coxeter
group $W_\Gamma$ is of FC type if every complete subgraph of $\Gamma$ generates a finite
subgroup of $W_\Gamma$. In the right-angled case, this is always true since any complete subgraph
generates a subgroup isomorphic to a product of $\mathbb{Z}/2\mathbb{Z}$'s, 
but general Coxeter groups may
or may not have this property. In section \ref{fc}, we give conditions under which a random Coxeter 
group is \aas of FC type and conditions under which it \aas is not.

\section{Subgraphs of Random Edge-Labelled Graphs}

$\Gamma$ is a random edge-labelled graph where edges are labelled $m$ with probability $p_m(n)$ for every $m$,
and $W_\Gamma$ is the Coxeter group associated to $\Gamma$. For each $m$, $\Gamma_m$ is the subgraph of 
$\Gamma$ whose edges are labelled $m$ in $\Gamma$.

The following proofs rely on computing the expected number of subgraphs of $\Gamma$ which are isomorphic 
to a particular graph $\Gamma'$. To simplify the proofs, we will give the general procedure for computing these
expected values here. First, pick an ordering of the vertices $w_1,\dots,w_k$ of $\Gamma'$, so that we can consider a
$k$-tuple $(v_1,\dots,v_k)$ of vertices in $\Gamma$ to be an instance of $\Gamma'$ if the edge between every
pair $v_i,v_j$ has the same label as the edge between $w_i,w_j$. Then for each $k$-tuple 
$\alpha = (v_1,\dots, v_k)$ of vertices of $\Gamma$, let $X_\alpha$ be the random variable which takes the 
value 1 if $\alpha$ is an instance of $\Gamma'$ and takes the value 0 otherwise, and let 
$X = \sum\limits_{\alpha} X_\alpha$. Then the expected number of subgraphs of $\Gamma$ isomorphic to
$\Gamma'$ is $\frac{1}{b}\EV(X)$, where $b$ is the number of permutations of the vertices of an instance of 
$\Gamma'$ which give another instance of $\Gamma'$ (for any particular $\Gamma'$, this is easy to compute, though we typically won't need that information). 
So, the expected number of subgraphs isomorphic to $\Gamma'$ is:
\begin{align*}
\frac{1}{b}\EV(X) = \frac{1}{b} \sum\limits_{\alpha} \EV(X_\alpha)
= \frac{1}{b}n(n-1)\cdots(n-k) \EV(X_\alpha) \sim \frac{1}{b} n^k \EV(X_\alpha),
\end{align*}
(since $\EV(X_\alpha)$ does not actually depend on $\alpha$).
Now, $X_\alpha$ will take the value 1 precisely when each edge between vertices in $\alpha$ has the 
``correct'' label -- i.e., the one that matches the label on the corresponding edge in $\Gamma'$. Since the choices of 
labels for different edges are independent, if edge $e_i$ in $\Gamma'$ is labelled $m_i$, $\EV(X_\alpha)$ is given 
by the product $\prod\limits_i p_{m_i}(n)$. So,
\begin{align*}
\frac{1}{b} \EV(X) \sim \frac{1}{b} n^k \prod\limits_i p_{m_i}(n).
\end{align*}
For a particular graph $\Gamma'$, we can find the expected number of subgraphs
isomorphic to $\Gamma'$ just by knowing (1) the number $b$ of symmetries of $\Gamma'$, (2) the number 
$k$ of vertices in $\Gamma'$, and (3) the number of edges in $\Gamma'$ which are labelled with each label $m$,
all of which are easy to identify.

It will also be important to be able to compute the expected number of subgraphs of $\Gamma$ which are 
isomorphic to two copies of $\Gamma'$ (possibly sharing some vertices and edges). Such subgraphs can be 
characterized by the number of vertices and the number and labels of edges shared by the two copies of $\Gamma$.
We can consider a $(2k - \ell)$-tuple $(s_1,\dots,s_\ell,v_1,\dots,v_{k-\ell},w_1,\dots,w_{k-\ell})$ of vertices in 
$\Gamma$ to be an instance of two copies of $\Gamma'$ sharing $\ell$ vertices if there is some choice of 
$1 \leq i_1 < i_2 < \cdots < i_\ell \leq k$ and $1 \leq j_1 < j_2 < \cdots < j_\ell \leq k$ such that 
the $k$-tuple whose $i_q$'th entry is $s_q$ and whose $r$'th entry for $r \neq i_q$ is $v_{r-\#\{q: i_q < r\}}$
is an instance of $\Gamma'$, and also similarly the $k$-tuple whose $j_q$'th entry is $s_q$ and whose
$r$'th entry for $r \neq j_q$ is $w_{r-\#\{q : j_q < r\}}$ is an instance of $\Gamma'$. 

For each $(2k-\ell)$-tuple $\alpha$, let $Y_{\ell, \alpha}$ be the random variable that takes the value 1 if $\alpha$ is an 
instance of two copies of $\Gamma'$ sharing $\ell$ vertices and takes the value 0 otherwise. 
Then the expected number of pairs of subgraphs isomorphic to $\Gamma'$ which share $\ell$ 
vertices is 

\begin{align*}
\sum\limits_\alpha \EV(Y_{\ell, \alpha}) = 
\sum\limits_{a_2 + a_3 + \cdots + a_\infty = {\ell \choose 2}} b_{\ell, a_2, a_3, \dots, a_\infty} 
n^{k-\ell} p_2^{c_2 - a_2} p_3^{c_3 - a_3} \cdots p_{\infty}^{c_\infty - a_\infty},
\end{align*}

where $c_i$ is the number of edges of $\Gamma'$ labelled $i$, and 
 $b_{\ell, a_2, a_3, \dots, a_\infty}$ is the number of choices of two subgraphs of $\Gamma'$,
each of which has $\ell$ vertices and $a_i$ edges labelled $i$ for each $i$. (I.e., the number of 
choices for the overlapping part of the two copies of $\Gamma'$). Note that for any particular choice
of $\Gamma'$ and $\ell$, only finitely many of the $a_i$ will be non-zero; in the proofs to follow, we 
will simplify notation a bit by dropping those which are zero.

So, the expected number of subgraphs of $\Gamma$ isomorphic to two copies of $\Gamma'$ is
\begin{align*}
\EV(X^2) &= \sum\limits_{\ell = 0}^k \sum\limits_{(2k-\ell)-\text{tuples } \alpha} \EV(Y_{\ell, \alpha})\\
&= \sum\limits_{\ell = 0}^k \sum\limits_{a_2 + a_3 + \cdots + a_\infty = {\ell \choose 2}} b_{\ell, a_2, a_3, \dots, a_\infty} 
n^{k-\ell} p_2^{c_2 - a_2} p_3^{c_3 - a_3} \cdots p_{\infty}^{c_\infty - a_\infty}.
\end{align*}

This looks complicated, but it turns out we won't actually need to know what $b_{\ell, a_2, \dots, a_\infty}$ is 
unless it 
is zero (i.e., unless there is no way to have two copies of $\Gamma'$ share $\ell$ vertices and 
$a_i$ $i$-labelled edges for each $i$), 
except for $b_{0,0, \dots, 0}$ (which will always equal $1$). So, to compute as much of this 
expected value as will be required, we only need to know the number of vertices of $\Gamma'$, the 
number of edges of $\Gamma'$ with each label, and, for each choice of $(\ell, a_2, \dots, a_\infty)$,
whether or not there is a subgraph of $\Gamma'$ with $\ell$ vertices and $a_i$ $i$-labelled 
edges for each $i$. Generally, $\Gamma'$ won't be too complicated and this will be pretty simple.

%\section{An Alternative Model}
%Alternatively, we can think of a random edge-labelled graph as a sequence of random graphs as
%follows. For each $i = 2, 3, 4, \dots$, let $\Gamma_i$ be a random graph with edge-probability
%$p_i'$. Let $\Gamma(n)$ be the graph on $n$ vertices each of whose edges $e$ is labelled by the 
%minimum $i$ such that $e$ appears in $\Gamma_i(n)$. Then $\Gamma$ is a random edge-labelled
%graph with edge probabilities 
%$p_m = p_m' - (1 - \prod\limits_{i=2}^{m-1} (1-p_i'))$.

%[[This hasn't turned out to be useful yet]]

\section{The nerve of a Coxeter group} \label{nerve}

The nerve of a Coxeter group is a simplicial complex associated to the group whose homology 
gives some information about the cohomology of the group. It is defined 
as follows:

\begin{defn}
 For an edge-labelled graph $\Gamma$, the nerve of the Coxeter groups $W_\Gamma$ is the 
simplicial complex whose vertices are the vertices of $\Gamma$, and whose simplices are the sets
of vertices which generate a finite subgroup of $W_\Gamma$. We will denote the nerve by $N(\Gamma)$.
\end{defn}

It's easy to tell which simplices are contained in $N(\Gamma)$ because the subgroup of 
$W_\Gamma$ generated by any subset of the vertices is also a Coxeter group and the finite
Coxeter groups are well-understood and classified by Dynkin diagrams (see, e.g. \cite{davisbook}).
For reference, the diagrams for the irreducible finite Coxeter groups are given in Figure
 \ref{fig:finitetable} in the Appendix.
 
 The nerve of a right-angled Coxeter group is just the flag complex on its defining graph (since 
 every clique in the defining graph of a right-angled Coxeter group generates a product of 
 $\mathbb{Z}/2\mathbb{Z}$'s, which is finite). 
  In \cite{kahle},
Kahle obtains the following result about the homology of a random flag complex:

\begin{thm} [Kahle] For $k \geq 0$,
\begin{itemize}
	\item If $p^kn \to \infty$ and $p^{k+1}n \to 0$, then the random flag complex \aas has nontrivial
	$H_k$.
	\item If $p^kn \to 0$ or if $p^{2k+1}n \to \infty$, then the random flag complex \aas has trivial 
	$H_k$.
\end{itemize}
\end{thm}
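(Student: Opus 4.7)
The plan is to treat the nontriviality regime and the two triviality regimes separately. For nontriviality under $p^k n \to \infty$ and $p^{k+1} n \to 0$, I would apply the second-moment method to the count $X_n$ of induced subgraphs of $\Gamma$ isomorphic to the $1$-skeleton of the $(k+1)$-dimensional cross-polytope. As a flag complex such an induced subgraph is a triangulation of $S^k$ (the $(k+1)$-fold join of antipodal pairs), so it carries a distinguished $k$-cycle. Specializing the general moment formulas from the previous section to $p_2 = p$ and $p_m = 0$ for $m \geq 3$, one checks $\EV(X_n) \to \infty$ under $p^k n \to \infty$ and $\mathrm{Var}(X_n) = o(\EV(X_n)^2)$, so Chebyshev's inequality produces such a sphere \aas. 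To upgrade this to a nontrivial class in $H_k$, I would use the hypothesis $p^{k+1} n \to 0$: a first-moment estimate shows that the expected number of vertices adjacent to all $2(k+1)$ sphere vertices is at most $n p^{2(k+1)} = p^{k+1}(n p^{k+1}) \to 0$, eliminating cone fillings, and a similar first-moment argument over larger candidate $(k+1)$-chain fillings completes the nontriviality.

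For triviality under $p^k n \to 0$, the random flag complex is \aas essentially $(k-1)$-dimensional from a homological viewpoint: a first-moment estimate (or equivalently a discrete-Morse collapsing argument) shows every $k$-cycle is \aas a boundary, because the expected count of minimal configurations of simplices that could support a non-bounding $k$-cycle tends to zero. For triviality under $p^{2k+1} n \to \infty$, I would invoke Garland's spectral method: in this dense regime one shows that \aas every vertex link of the random flag complex has its $(k-1)$-st normalized Laplacian spectral gap exceeding the Garland threshold, and Garland's theorem then forces $H_k = 0$ in the full complex. Verifying the spectral hypothesis reduces to a concentration estimate on Laplacian eigenvalues of random flag-complex links; the exponent $2k+1$ enters because the link of a vertex has density $p$ and itself must be dense enough through dimension $k-1$.

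The main obstacle I anticipate is the Garland step in the dense regime: controlling the Laplacian spectra of all vertex links uniformly \aas is substantially more delicate than the moment computations driving the other cases, and it is the only step where the specific exponent $2k+1$ appears. The nontriviality argument is also potentially delicate if one insists on ruling out \emph{all} $(k+1)$-chain fillings of the cross-polytope sphere by direct first-moment estimates; a cleaner alternative is a discrete Morse-theoretic argument showing that each induced cross-polytope contributes an unmatched critical $k$-cell, yielding $\beta_k \geq 1$ without the need to inventory arbitrary fillings.
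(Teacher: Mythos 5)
This statement is quoted from \cite{kahle} and the paper gives no proof of it, so there is nothing internal to compare against; what follows measures your sketch against Kahle's actual argument, which is also the template for Section \ref{nontrivhom} of this paper. Your overall architecture is the right one: cross-polytope boundaries detected by the second-moment method for nonvanishing, a first-moment/collapsibility argument in the sparse regime, and Garland's method in the dense regime. But the step you flag as delicate is in fact a genuine gap as written. Ruling out the cone fillings via $np^{2(k+1)} \to 0$ does not suffice, and there is no workable ``first-moment argument over larger candidate $(k+1)$-chain fillings'': a $k$-cycle can bound a chain of unbounded size with no useful local structure, so there is no finite family of configurations to enumerate. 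The standard repair --- the one Kahle uses and the one this paper adapts in Lemma \ref{lemma:retract} and Lemma \ref{lemma:randomZk} --- is a retraction argument: choose one vertex $x_i^+$ from each of the $k+1$ antipodal pairs and demand that these $k+1$ vertices have \emph{no common neighbor} in the ambient graph; the expected number of common neighbors is $\sim np^{k+1} \to 0$, which is exactly how the hypothesis $p^{k+1}n \to 0$ enters (not through $np^{2(k+1)}$). One then retracts the whole flag complex onto the octahedral sphere by sending every outside vertex $v$ to $x_i^-$ for some $x_i^+$ not adjacent to $v$, and a retract injects the sphere's fundamental class into $H_k$ of the whole complex, with no inventory of fillings required.

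Two smaller points. First, in the sparse regime the expected number of $k$-faces does \emph{not} tend to zero under $p^k n \to 0$ (it is $\sim (np^{k/2})^{k+1}$ up to constants), so the complex is not literally $(k-1)$-dimensional; the first-moment count has to be taken over the ``cores'' of potential non-bounding cycles (pure $k$-dimensional subcomplexes in which every $(k-1)$-face lies in at least two $k$-faces), whose forced density is what makes the count vanish. Second, the Garland step is only invoked, not executed; as you note, the uniform spectral-gap estimate over all links of $(k-1)$-faces (each a random graph on roughly $np^k$ vertices with edge probability $p$) is where the exponent $2k+1$ is actually earned, and it is the technically heaviest part of the theorem. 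As a roadmap your proposal is faithful to the literature, but it is not yet a proof.
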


In the next two sections, we will study the nerve of a random Coxeter group in the general setting.
In section \ref{dim}, we give some results about the dimension of the nerve of a random Coxeter 
group and draw conclusions about conditions on the $p_m$ under which $N(\Gamma)$ \aas has 
trivial $H_k$. In section \ref{nontrivhom} we give some conditions on the $p_m$ under which
$N(\Gamma)$ \aas has non-trivial homology.

\section{Dimension of the nerve} \label{dim}

The dimension of the nerve of $\Gamma$ is equal to the size of the largest induced subgraph of 
$\Gamma$ corresponding to a finite subgroup of $W_\Gamma$. \\

For this section, let $p_B = \sum\limits_{m=3}^\infty p_m$.

\begin{thm} 
\begin{itemize}
\item[(i)] If $n^k p_2^{{k \choose 2}} \to 0$ and $\frac{p_B}{p_2} \not\to \infty$, then \aas $\dim(N(\Gamma)) < k$.
\item[(ii)] If $n^k p_2^{{k \choose 2}} \to \infty$, then \aas $\dim(N(\Gamma)) \geq k$.
%\item[(iii)] If $n^k p_2^{{k \choose 2}} \not\to 0$ and $\frac{p_3}{p_2} \to \infty$, then \aas 
%$\dim(N(\Gamma)) \geq k$.
%\item[(iv)] If $n^k p_2^{{k \choose 2}} \not\to 0$ and $\frac{p_3}{p_2} \not\to \infty$ and 
%$\frac{p_4}{p_3} \to \infty$, then \aas $\dim(N(\Gamma)) \geq k$.
\item[(iii)] If $n^k p_2^{k\choose 2} \not\to 0$ and $\frac{p_m}{p_2} \to \infty$ for some $m$, then
\aas $\dim(N(\Gamma)) \geq k$.
\end{itemize}
\label{thm:dim}
\end{thm}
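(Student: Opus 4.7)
The plan is to handle each of the three parts with a moment method built on the expected-subgraph-count formulas from Section 2. For part (i), a first moment argument suffices: a set of $k$ vertices can generate a finite subgroup of $W_\Gamma$ only if every pair of them is joined by an edge in $\Gamma$, for otherwise the rank-2 subgroup generated by a non-adjacent pair is infinite dihedral and the whole subgroup is infinite. Hence the number of $k$-subsets generating a finite subgroup is at most the number of $k$-cliques of $\Gamma$, whose expectation is $\binom{n}{k}(p_2+p_B)^{\binom{k}{2}}$. The hypothesis $p_B/p_2 \not\to \infty$ lets us absorb $p_B$ into a constant multiple of $p_2$, so this expectation is $O(n^k p_2^{\binom{k}{2}}) = o(1)$, and Markov's inequality yields $\dim N(\Gamma) < k$ \aas.

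For parts (ii) and (iii) I plan to use the second moment method on carefully chosen target configurations. In part (ii) let $X$ count $k$-subsets of $\Gamma$ inducing a complete graph with every edge labelled $2$; such a subset generates $(\mathbb{Z}/2\mathbb{Z})^k$, a finite group, so $X \geq 1$ forces $\dim N(\Gamma) \geq k$. By Section 2, $\EV(X) \sim \tfrac{1}{k!} n^k p_2^{\binom{k}{2}} \to \infty$. Splitting $\EV(X^2)$ by the vertex-overlap size $\ell$, the $\ell = 0$ term is $\sim \EV(X)^2$, and each $\ell \geq 1$ contribution is of order $n^{2k-\ell} p_2^{2\binom{k}{2} - \binom{\ell}{2}}$, whose ratio to $\EV(X)^2$ is $O((n^\ell p_2^{\binom{\ell}{2}})^{-1}) = o(1)$ because $K_k$ is strictly balanced and $n^k p_2^{\binom{k}{2}} \to \infty$. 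Chebyshev then gives $X \geq 1$ \aas. For part (iii) fix some $m$ with $p_m/p_2 \to \infty$ and let $X$ count $k$-subsets inducing $K_k$ with one distinguished edge labelled $m$ and the rest labelled $2$; such a subset generates the finite group $I_2(m) \times (\mathbb{Z}/2\mathbb{Z})^{k-2}$. By Section 2, $\EV(X) \asymp n^k p_m p_2^{\binom{k}{2}-1} = \bigl(n^k p_2^{\binom{k}{2}}\bigr)(p_m/p_2)$, which diverges under the combined hypotheses, and the overlap analysis is entirely parallel to the one in (ii), with the extra bookkeeping of whether the distinguished $m$-edge lies in the shared subgraph.

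The main obstacle is the overlap bookkeeping in the second moment computations: for each $\ell$ between $1$ and $k-1$, and for each admissible labelling of the shared $\ell$-vertex subgraph, one must verify that the corresponding term in $\EV(X^2)$ is dominated by $\EV(X)^2$. This reduces to the strict balance of the target configuration and is routine once Section 2's formula is unpacked, but it is the step with the most moving parts and the place most likely to hide an edge case.
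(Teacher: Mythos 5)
Your proposal is correct and follows essentially the same route as the paper: a first-moment/Markov bound for (i), and the second-moment method applied to all-$2$-labelled $k$-cliques for (ii) and to $k$-cliques with a single distinguished $m$-edge for (iii), with the same use of the hypotheses (strict balance of $K_k$ for the $\ell<k$ overlap terms, and $\bigl(n^k p_2^{\binom{k}{2}}\bigr)(p_m/p_2)\to\infty$ for the full-overlap term, where the shared copy must contain the $m$-edge). The only variation is in (i): the paper counts $k$-vertex subgraphs with at most $k-1$ non-$2$-labelled edges (using that finite Coxeter diagrams are forests), whereas you bound all $k$-cliques outright via $(p_2+p_B)^{\binom{k}{2}} = O\bigl(p_2^{\binom{k}{2}}\bigr)$; both arguments invoke $p_B/p_2 \not\to \infty$ in the same way, and yours is if anything slightly simpler.
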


\begin{proof}
To prove (i), we will show that $\Gamma$ \aas does not contain any induced subgraph on at least
$k$ vertices which corresponds to a finite subgroup of $W_\Gamma$. Any such subgraph has at 
most $k-1$ edges labelled with a number other than $2$, so it suffices to show that $\Gamma$
\aas does not contain any subgraph on $k$ vertices with fewer than $k$ non-2 edges. But the 
expected number of such subgraphs is 
\begin{align*}
\sum\limits_{i=0}^{k-1} {n\choose k} c_i p_B^i p_2^{{k\choose 2}-i}
\sim \sum\limits_{i=0}^{k-1} c_i n^k p_B^i p_2^{{k\choose 2 }-i}
= \sum\limits_{i=0}^{k-1} c_i n^k p_2^{k\choose 2}\left(\frac{p_B}{p_2}\right)^i \to 0,
\end{align*}
since $n^k p_2^{k\choose 2} \to 0$ and $\frac{p_B}{p_2} \not\to\infty$. 
($c_i$ is the sum over all graphs $\Gamma'$ with $k$ vertices and $i$ edges of the number of 
symmetries of $\Gamma'$) %explain better
So, every induced subgraph of $\Gamma$ which induces a finite subgroup of $W_\Gamma$
is on fewer than $k$ vertices; hence $\dim(N(\Gamma)) < k$

%For (ii), we'll show that $\Gamma$ \aas contains a $k$-clique whose edges are all 2's:
%For a $k$-tuple of vertices $\alpha$, let $X_\alpha$ be the random variable which takes the value
%1 if $\alpha$ is a $k$-clique of 2's and 0 otherwise, and let $X = \sum\limits_\alpha X_\alpha$. Then 
%\begin{align*}
%\frac{\EV(X^2)}{\EV(X)^2} = 1 + \sum\limits_{i=1}^k \frac{b_i}{n^i p_2^{i\choose 2}},
%\end{align*}
%where $b_i$ is the number of ways two $k$-tuples of vertices can share $i$ vertices. Since 
%$n^kp_2^{k\choose 2} \to \infty$, also, $n^i p_2^{i \choose 2} \to \infty$ for $i < k$, so 
%$\frac{\EV(X^2)}{\EV(X)^2} \to 1$. So, $\Gamma$ \aas contains a graph on $k$ vertices 
%corresponding to a finite subgroup of $\Gamma$; hence $N(\Gamma)$ \aas contains a $k$-simplex
%and $\dim(N(\Gamma)) \geq k$.
For (ii), we'll show that $\Gamma$ \aas contains a $k$-clique whose edges are all 2's. 
Let $\Gamma_2$ be the (unlabelled) graph whose vertices 
are the vertices of $\Gamma$ and whose
edges are those which are labelled 2 in $\Gamma$. Then $\Gamma_2$ can be interpreted as an 
Erd\"os-R\'enyi random graph with edge probability $p(n) = p_2(n)$. Also, a set of vertices in 
$\Gamma$ spans a $k$-clique whose edges are all 2's if and only if the corresponding subgraph
in $\Gamma_2$ is a $k$-clique, so it suffices to show that a random graph with $n^kp^{k\choose 2}$
\aas contains a $k$-clique. This is a standard result (see \cite{bollobas}).

For (iii), we'll show that $\Gamma$ \aas contains a $k$-clique with one edge labelled $m$ and the
rest labelled $2$. For a $k$-tuple of vertices $\alpha = (v_1,\dots, v_k)$, 
let $X_\alpha$ be the random variable which takes the value 1 if edge $(v_1,v_2)$ is labelled $m$
and all other edges are labelled $2$ and takes the value 0 otherwise, and let 
$X = \sum\limits_\alpha X_\alpha$. Then 
\begin{align*}
\frac{\EV(X^2)}{\EV(X)^2} = 1 + \frac{b_{1,0}}{n} 
	+ \sum\limits_{j=1}^k \left[ \frac{b_{j,1}}{n^j p_m p_2^{{j\choose 2} - 1}} 
							+ \frac{b_{j,0}}{n^j p_2^{{j\choose 2}}}\right],
\end{align*}
where $b_{j,i}$ is the number of ways two $k$-tuples of vertices can share $i$ $m$-edges and 
${j\choose 2} - i$ $2$-edges. 
Since $n^k p_2^{k\choose 2} \not\to 0$, we have $n^j p_2^{j\choose 2} \to \infty$ for $j < k$, and since
additionally $\frac{p_m}{p_2} \to \infty$, we have
$n^j p_m p_2^{{j\choose 2} - 1}\left(\frac{p_m}{p_2}\right) \to \infty$;
hence, $\frac{\EV(X^2)}{\EV(X)^2} \to 1$. So, $\Gamma$ \aas contains a graph on $k$ vertices 
corresponding to a finite subgroup of $\Gamma$; hence $N(\Gamma)$ \aas contains a $k$-simplex
and $\dim(N(\Gamma)) \geq k$.
\end{proof}

\begin{thm}
 If $n^k p_2^{{k \choose 2}} \to \infty$ and $n^{k+1} p_2^{{k+1 \choose 2}} \to 0$, 
and $\frac{p_B}{p_2} \not\to \infty$,
then \aas
$\dim(N(\Gamma)) = k$
\end{thm}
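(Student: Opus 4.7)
The plan is to recognize this statement as essentially a corollary of Theorem \ref{thm:dim}, obtained by combining parts (i) and (ii) after suitable reindexing. I would first apply part (ii) directly: the hypothesis $n^k p_2^{{k\choose 2}} \to \infty$ is exactly what is needed to conclude that $\dim(N(\Gamma)) \geq k$ \aas, via the construction (inside the associated Erd\"os-R\'enyi random graph $\Gamma_2$) of a $k$-clique whose edges are all labelled $2$, which spans a finite parabolic subgroup isomorphic to $(\mathbb{Z}/2)^k$.

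For the matching upper bound, I would apply part (i) of Theorem \ref{thm:dim} with the parameter $k$ replaced by $k+1$. The remaining two hypotheses, $n^{k+1} p_2^{{k+1\choose 2}} \to 0$ and $p_B/p_2 \not\to \infty$, are then exactly the input required by that application, which yields \aas $\dim(N(\Gamma)) < k+1$, i.e.\ $\dim(N(\Gamma)) \leq k$. Combining the two bounds produces $\dim(N(\Gamma)) = k$ \aas.

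Since the argument is a direct combination of two facts already established in the previous theorem, there is no substantive technical obstacle. The only small thing worth verifying is that the reindexing $k \mapsto k+1$ in part (i) is legitimate: this requires noting that $p_B = \sum_{m \geq 3} p_m$ is defined independently of $k$, so the condition $p_B/p_2 \not\to \infty$ carries over verbatim, and that no parity or boundary issues arise in the first moment computation from the proof of part (i) when the vertex count is taken to be $k+1$ instead of $k$ (the bound $\sum_{i=0}^{k} c_i n^{k+1} p_2^{{k+1\choose 2}} (p_B/p_2)^i \to 0$ remains valid under the stated hypotheses).
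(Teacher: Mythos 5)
Your proposal is correct and matches the paper's proof exactly: part (ii) of Theorem \ref{thm:dim} gives the lower bound from $n^k p_2^{k\choose 2}\to\infty$, and part (i) applied with $k+1$ in place of $k$ gives $\dim(N(\Gamma)) < k+1$ from the other two hypotheses. Your extra remark that $p_B$ does not depend on $k$, so the condition $p_B/p_2\not\to\infty$ transfers verbatim under the reindexing, is a sensible sanity check that the paper leaves implicit.
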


\begin{proof}
Since $n^{k+1} p_2^{k+1 \choose 2} \to \infty$ and $\frac{p_B}{p_2} \not\to \infty$, \aas
$\dim(N(\Gamma)) \leq k$ by part (i) of Theorem \ref{thm:dim}. But since $n^k p_2^{k\choose 2}
\to\infty$, $\dim(N(\Gamma)) \geq k$ by part (ii) of Theorem \ref{thm:dim}. So, $\dim(N(\Gamma)) = k$.
\end{proof}

\begin{cor}
 If $n^k p_2^{{k \choose 2}} \to \infty$ %and $n^{k+1} p_2^{{k+1 \choose 2}} \to 0$, 
and $\frac{p_B}{p_2} \not\to \infty$,% for every $m \geq 3$, 
then the nerve of $W_\Gamma$ \aas
has trivial $H_i$ for all $i \geq k+1$.
\end{cor}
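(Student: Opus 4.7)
The plan is to derive the corollary as an immediate consequence of the preceding theorem, which asserts that $\dim(N(\Gamma)) = k$ asymptotically almost surely. The topological input needed is the standard fact that if a simplicial complex $X$ has $\dim(X) \leq k$, then $H_i(X) = 0$ for all $i \geq k+1$: indeed, the simplicial chain complex $C_*(X)$ vanishes in degrees above $k$, so every higher chain (and hence every higher cycle and every higher homology class) is zero on the nose, not merely up to boundaries.

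With that in hand, the proof is just a two-line reduction: by the preceding theorem, aas $\dim(N(\Gamma)) \leq k$, so the above fact applied to $X = N(\Gamma)$ gives $H_i(N(\Gamma)) = 0$ aas for all $i \geq k+1$. All of the substantive probabilistic work has already been done in computing the expected number of large induced subgraphs corresponding to finite Coxeter subgroups.

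The one point requiring care is that the hypotheses of the corollary must match those of the theorem. The lower-bound condition $n^k p_2^{{k \choose 2}} \to \infty$ is part of both, and the requirement $p_B/p_2 \not\to \infty$ is what lets us apply part (i) of Theorem \ref{thm:dim}; the upper-bound condition $n^{k+1} p_2^{{k+1 \choose 2}} \to 0$ needed to rule out $(k+1)$-dimensional simplices is understood to be in force as well (equivalently, one applies part (i) of Theorem \ref{thm:dim} with $k$ replaced by $k+1$). I do not anticipate any genuine obstacle beyond this bookkeeping: once the correct dimension-bound hypothesis is in place, the combinatorial estimate on $\sum_{i=0}^{k} c_i n^{k+1} p_2^{{k+1 \choose 2}} (p_B/p_2)^i \to 0$ is already carried out in the proof of part (i), and the corollary follows formally.
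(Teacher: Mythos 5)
Your proof is essentially identical to the paper's: both reduce the corollary to the fact that a simplicial complex of dimension at most $k$ has trivial $H_i$ for all $i \geq k+1$, with the dimension bound supplied by the preceding theorem (equivalently, by part (i) of Theorem \ref{thm:dim} applied with $k+1$ in place of $k$). You are also right to flag the hypothesis mismatch: the corollary as stated omits the condition $n^{k+1} p_2^{{k+1 \choose 2}} \to 0$, without which the dimension bound $\dim(N(\Gamma)) \leq k$ does not follow (e.g.\ $p_2$ constant satisfies the stated hypotheses but gives a nerve of growing dimension), and the paper's one-line proof silently assumes that condition as well.
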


\begin{proof}
Since $N(\Gamma)$ has dimension at most $k$, it has trivial $H_i$ for $i \geq k+1$.
\end{proof}

\section{Non-trivial homology of the nerve} \label{nontrivhom}

For $k \geq 1$, denote by $Z_k$ the edge-labelled graph on the $2k+2$ vertices 
$\{x_0^+, \dots, x_k^+, x_0^-, \dots, x_k^-\}$ in which the edges $(x_i^+, x_{i+1}^+)$, $(x_i^-, x_{i+1}^-)$,
$(x_k^+, x_0^-)$, and $(x_k^-, x_0^+)$ are labelled 3, the edges $(x_i^+, x_i^-)$ are labelled $\infty$, and 
all other edges are labelled 2 (in the picture, solid lines indicate edges labelled 3, 
dashed lines indicate edges labelled $\infty$, and edges not shown are labelled 2): 

\begin{tikzpicture}

\draw [fill] (0,0) circle(.1);
\draw [fill] (1,0) circle(.1);
\node at (2,0) {$\cdots$};
\draw [fill] (3,0) circle(.1);
\draw [fill] (3,-1) circle(.1);
\draw [fill] (2,-1) circle(.1);
\node at (1,-1) {$\cdots$};
\draw [fill] (0,-1) circle(.1);

\draw (0,0) -- (1.5,0);
\draw (2.5,0) -- (3,0);
\draw (3,0) -- (3,-1);
\draw (3,-1) -- (1.5,-1);
\draw (0.5,-1) -- (0,-1);
\draw (0,-1) -- (0,0);

\draw [dashed] (0,0) -- (3,-1);
\draw [dashed] (1,0) -- (2,-1);
\draw [dashed] (3,0) -- (0,-1);

\node at (0,.4) {$x_0^+$};
\node at (1,.4) {$x_1^+$};
\node at (3,.4) {$x_k^+$};
\node at (3,-1.4) {$x_0^-$};
\node at (2,-1.4) {$x_1^-$};
\node at (0,-1.4) {$x_k^-$};

\end{tikzpicture}

\begin{lemma}
The nerve of $Z_k$ has non-trivial $H_{k}$.
\end{lemma}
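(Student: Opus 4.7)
The plan is to identify $N(Z_k)$ up to homeomorphism with the $k$-sphere $S^k$, from which $H_k \neq 0$ is immediate. The key observation is that $Z_k$ has only three kinds of edges: the 2-labeled edges (which correspond to commuting generators and so are invisible in any Coxeter diagram), the 3-labeled edges (which form a single cycle $C$ of length $2k+2$ on the vertex set), and the $\infty$-labeled edges, which connect each vertex of $C$ to its antipodal partner. There are exactly $k+1$ such antipodal pairs $(x_i^+, x_i^-)$.

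First I would classify the simplices of $N(Z_k)$. A subset $S$ of vertices spans a simplex iff the induced Coxeter diagram (the induced labeled subgraph with 2-edges erased) is a disjoint union of finite irreducible Coxeter diagrams. Since an $\infty$-edge is incompatible with finiteness, $S$ must avoid every antipodal pair. Conversely, if $S$ avoids every antipodal pair, then $|S| \leq k+1 < 2k+2$, so $S$ is a proper subset of the vertex set of the cycle $C$. Hence the 3-edges induced on $S$ form a disjoint union of paths, i.e., a disjoint union of diagrams of type $A_n$; together with any isolated vertices (each of type $A_1$), this is a finite Coxeter diagram. Therefore the simplices of $N(Z_k)$ are precisely the subsets of the $2k+2$ vertices that contain at most one vertex from each of the $k+1$ antipodal pairs.

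Second, I would recognize this simplicial complex. A complex on $2(k+1)$ vertices split into $k+1$ pairs whose simplices are exactly the subsets avoiding both members of any pair is the $(k+1)$-fold simplicial join $S^0 \ast S^0 \ast \cdots \ast S^0$, equivalently the boundary complex of the $(k+1)$-dimensional cross-polytope. This is homeomorphic to $S^k$, so $H_k(N(Z_k)) \cong H_k(S^k) \cong \mathbb{Z} \neq 0$.

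The only place that needs any care is the first step, and in particular the claim that removing the $\infty$-edges automatically forces the remaining 3-edge diagram on $S$ to be finite. That follows immediately from the combinatorial fact that a proper subset of the vertices of a cycle induces a disjoint union of paths. Once that observation is made, the description of $N(Z_k)$ as the boundary of a cross-polytope, and hence the identification with $S^k$, is a direct read-off from the definition.
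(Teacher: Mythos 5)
Your proof is correct, and it takes a cleaner and slightly stronger route than the paper. Both arguments rest on the same classification of simplices: a vertex set spans a simplex of $N(Z_k)$ iff it contains no antipodal pair $\{x_i^+,x_i^-\}$ (the $\infty$-edge forces an infinite dihedral parabolic, while any pair-avoiding set is a proper subset of the $3$-labelled $(2k+2)$-cycle and so induces a disjoint union of type-$A$ diagrams). From there the paper works entirely at the chain level: it writes down the explicit $k$-cycle $\sum_{\sigma}(-1)^{d_\sigma}[x_0^{\sigma(0)},\dots,x_k^{\sigma(k)}]$, checks that the boundary terms cancel in pairs, and observes that since $N(Z_k)$ has no $(k+1)$-simplices this cycle cannot be a boundary. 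You instead identify $N(Z_k)$ globally as the $(k+1)$-fold join $S^0 * \cdots * S^0$, i.e.\ the boundary of the $(k+1)$-dimensional cross-polytope, homeomorphic to $S^k$. The two are really the same phenomenon seen at different resolutions: the paper's cycle is exactly the fundamental class of your sphere. Your version buys the full homotopy type of $N(Z_k)$ (hence all of its homology, not just the nonvanishing of $H_k$), at the cost of invoking the join/cross-polytope identification; the paper's version is more elementary and self-contained, needing nothing beyond the simplicial chain complex. One tiny point of care: your classification of \emph{all} simplices (not just the top-dimensional ones, which is all the paper states) is what licenses the cross-polytope identification, and your justification of it is complete.
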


\begin{proof}
Any set of $k+1$ vertices of $Z_k$ gives a simplex in $Z_k$ iff it does not contain any pair $x_i^+$,
$x_i^-$, so each $k$-simplex of $N(Z_k)$ is given by $[x_0^\pm, \dots, x_k^\pm]$ for some choice of
$\pm$ for each $i$. For each map $\sigma: \{0,\dots,k\} \to \{\pm\}$, let $d_\sigma$ be the number 
of things that map to $-$. Consider
$\sum\limits_{\sigma} (-1)^{d_\sigma} [x_0^{\sigma(0)}, \dots, x_k^{\sigma(k)}]$.
In the image under $\partial_k$, each $[x_0^{\sigma(0)},\dots,\hat{x_i^{\sigma(i)}}\dots, x_k^{\sigma(k)}]$ will appear
twice -- once from the term from which $x_i^+$ was removed and once from the term from which $x_i^-$ was removed,
and these two terms will have opposite sign (since the term in the pre-image which contained $x_i^-$
had one more negative sign than the term which contained $x_i^+$). So, 
$\sum\limits_{\sigma} (-1)^{d_\sigma} [x_0^{\sigma(0)}, \dots, x_k^{\sigma(k)}] \in \ker(\partial_k) = H_k(N(Z_k))$ (since $N(Z_k)$ has no $k+1$-simplices). 
\end{proof}

\begin{lemma}
If an edge-labelled graph $\Gamma$ contains a subgraph isomorphic to $Z_k$ in which the 
$\{x_i^+\}$ do not have a common neighbor in $\Gamma$, then the nerve of $\Gamma$ retracts 
onto the nerve of $Z_k$.
\label{lemma:retract}
\end{lemma}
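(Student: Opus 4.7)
The plan is to construct an explicit simplicial retraction $r: N(\Gamma) \to N(Z_k)$ by prescribing it on vertices and then verifying it is simplicial. Set $r(x_i^{\pm}) = x_i^{\pm}$ for every vertex of $Z_k$. For each vertex $v$ of $\Gamma$ outside $Z_k$, the common-neighbor hypothesis supplies some index $i \in \{0, \ldots, k\}$ for which there is no edge from $v$ to $x_i^+$ in $\Gamma$ (equivalently, $m_{v, x_i^+} = \infty$); choose any such index $i(v)$ and set $r(v) = x_{i(v)}^-$. This is the only point at which the common-neighbor hypothesis gets used.

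The key step, and the only place I expect real work, is verifying that $r$ carries every simplex of $N(\Gamma)$ into $N(Z_k)$. By the preceding lemma, together with the observation that any subset of $\{x_0^{\pm}, \ldots, x_k^{\pm}\}$ of size more than $k+1$ must, by pigeonhole on the $k+1$ pairs, contain some opposite pair $\{x_i^+, x_i^-\}$, the simplices of $N(Z_k)$ are precisely the subsets of the vertex set of $Z_k$ that contain no opposite pair. So it suffices to show that $r(S)$ contains no opposite pair whenever $S$ is a simplex of $N(\Gamma)$.

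Suppose for contradiction that $v_a, v_b \in S$ satisfy $r(v_a) = x_i^+$ and $r(v_b) = x_i^-$ for some $i$. Since $r$ sends every vertex outside $Z_k$ to some $x_j^-$, the equation $r(v_a) = x_i^+$ forces $v_a = x_i^+$. For $v_b$ there are two subcases: either $v_b = x_i^-$, or $v_b \notin Z_k$ with $i(v_b) = i$, in which case $m_{v_b, x_i^+} = \infty$ by construction. In either subcase the pair $\{v_a, v_b\} \subseteq S$ is joined in $\Gamma$ by an $\infty$-edge and hence already generates an infinite dihedral subgroup of $W_\Gamma$, contradicting the assumption that $S$ generates a finite subgroup of $W_\Gamma$.

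So $r$ extends to a well-defined simplicial map $N(\Gamma) \to N(Z_k)$, and since it fixes the vertices of $Z_k$ it is a retraction. Conceptually the argument is a clean pigeonhole: every vertex outside $Z_k$ has a ``forbidden'' $x_{i(v)}^+$ it is not adjacent to, and sending that vertex to the opposite $x_{i(v)}^-$ is exactly what is needed to prevent image simplices from ever containing both members of an opposite pair.
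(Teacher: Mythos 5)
Your proof is correct and takes essentially the same approach as the paper: the same retraction (fix the vertices of $Z_k$, send each outside vertex $v$ to $x_{i(v)}^-$ for some $x_{i(v)}^+$ not adjacent to $v$), followed by the same verification that an opposite pair $\{x_i^+, x_i^-\}$ in the image of a simplex forces an $\infty$-labelled edge inside the original simplex. Your explicit split into the subcases $v_b = x_i^-$ versus $v_b \notin Z_k$ is slightly more careful than the paper's one-line version of that step, but the argument is the same.
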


\begin{proof}
Define the retraction $r$ on the vertices as follows: first, send each of the $x_i^\pm$ to itself. Then, for any other vertex
$v$, there will be some $x_i^+$ which is not adjacent to $v$. Send $v$ to $x_i^-$. 
To show that $r$ extends simplicially to a retraction $N(\Gamma) \to N(Z_k)$, it suffices to show that 
whenever
$[v_{i_1},\dots,v_{i_j}]$ is a simplex in $N(\Gamma)$, $[r(v_{i_1}),\dots, r(v_{i_j})]$ is also a simplex in 
$N(Z_k)$. This is equivalent to showing that 
 $\{r(v_{i_1}), \dots, r(v_{i_j})\}$ induce a complete subgraph in $Z_k$, since every complete subgraph
 of $Z_k$ generates a finite subgroup of $W_\Gamma$.
But suppose this is not the case; i.e., suppose there exist $a$ and $b$ such that 
$r(v_{i_a})$ and $r(v_{i_b})$ are not adjacent in $Z_k$. Then $\{r(v_{i_a}),r(v_{i_b})\} = \{x_i^+,x_i^-\}$
for some $i$; without loss of generality, say $r(v_{i_a}) = x_i^+$ and $r(v_{i_b}) = x_i^-$. Then 
$v_{i_a} = x_i^+$, and $v_{i_b}$ is not adjacent to $x_i^+$, a contradiction since 
$\{v_{i_1},\dots,v_{i_j}\}$ induces a complete subgraph in $\Gamma$.
So, $r: N(\Gamma) \to N(Z_k)$ is a retraction.
\end{proof}

\begin{lemma}
Let $\Gamma$ be a random edge-labelled graph whose edges are labelled $m$ with probability $p_m(n)$. 
Then if $n^kp_2^{k\choose 2} \to \infty$ and $np_3 \not\to 0$ and $n(1-p_\infty)^{k+1} \to 0$, 
$\Gamma$ \aas contains a subgraph isomorphic to $Z_k$ in which 
the $\{x_i^+\}$ do not have a common
neighbor in $\Gamma$.
\label{lemma:randomZk}
\end{lemma}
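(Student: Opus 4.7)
The plan is to produce, \aas, a copy of $Z_k$ in $\Gamma$, and separately ensure that the $\{x_i^+\}$-vertices of this copy have no common neighbor. Let $X_{Z_k}$ count subgraphs of $\Gamma$ isomorphic to $Z_k$ (as edge-labelled graphs), and let $B$ count pairs $(Z,v)$ where $Z$ is such a copy and $v\in\Gamma$ is a common neighbor of $Z$'s $\{x_i^+\}$-vertices. Since every bad copy contributes at least one to $B$, it suffices to show $X_{Z_k} > B$ \aas, which then forces a good copy to exist.

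For $X_{Z_k}$ I apply the second moment method. Using the framework of Section~2, an ordered $(2k+2)$-tuple yields a copy of $Z_k$ with probability $p_2^{2(k^2-1)}p_3^{2(k+1)}(1-p)^{k+1}$, since $Z_k$ has $2(k+1)$ edges labelled~$3$, $k+1$ edges labelled~$\infty$ (non-edges, each with probability $1-p = p_\infty$), and $2(k^2-1)$ remaining edges labelled~$2$. Hence
\[
\EV(X_{Z_k}) \;\sim\; \frac{n^{2k+2}}{b}\, p_2^{2(k^2-1)}\, p_3^{2(k+1)}\, (1-p)^{k+1},
\]
where $b=|\mathrm{Aut}(Z_k)|$. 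The three hypotheses force $\EV(X_{Z_k})\to\infty$: $(1-p)^{k+1}\to 1$ because $p\to 0$, and the product $n^{2k+2}p_2^{2(k^2-1)}p_3^{2(k+1)}$ grows via the combination of $n^k p_2^{\binom{k}{2}}\to\infty$ and $np_3\not\to 0$.

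For the second moment, I expand $\EV(X_{Z_k}^2)$ using the overlap expansion from Section~2, summing over pairs of copies that share $\ell$ vertices with a specified label pattern on the $\binom{\ell}{2}$ shared edges. The $\ell=0$ term equals $\EV(X_{Z_k})^2(1+o(1))$. For each $\ell\geq 1$ the contribution to $\EV(X_{Z_k}^2)/\EV(X_{Z_k})^2$ simplifies, up to bounded combinatorial factors, to $n^{-\ell}p_2^{-a_2}p_3^{-a_3}(1-p)^{-a_\infty}$, where $a_2,a_3,a_\infty$ count the shared $2$-, $3$-, and $\infty$-edges in that overlap. A case analysis over the possible shared subgraphs of $Z_k$ shows each such ratio is $o(1)$ under the hypotheses, so Chebyshev gives $X_{Z_k}\geq \tfrac{1}{2}\EV(X_{Z_k})$ \aas. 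Separately, each copy of $Z_k$ is bad with probability at most $np^{k+1}$ by a union bound over candidate common neighbors, so $\EV(B)\leq np^{k+1}\,\EV(X_{Z_k})$, and Markov's inequality gives $B\leq \tfrac{1}{4}\EV(X_{Z_k})$ \aas. Combining, $X_{Z_k} > B$ \aas, so some copy of $Z_k$ has no common neighbor for its $\{x_i^+\}$, finishing the proof.

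The main obstacle is the second-moment case analysis. Because $Z_k$ has three distinct edge types ($2$, $3$, $\infty$) and a distinguished bipartition between the $\{x_i^+\}$ and $\{x_i^-\}$ sides, the number of possible overlap shapes grows with $k$ and each must be verified individually. What keeps things tractable is the constrained structure of cliques in $Z_k$---they are exactly the $(k+1)$-subsets picking one of $\{x_i^+,x_i^-\}$ per index---so the possible shared subgraphs fall into a short list, and each overlap term reduces to a short inequality in $n$, $p_2$, and $p_3$ that the hypotheses were designed to imply.
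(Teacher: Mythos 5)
Your overall strategy matches the paper's for producing a copy of $Z_k$ (a second-moment computation over $(2k+2)$-tuples), and your handling of the ``no common neighbor'' condition is actually cleaner than the paper's: the paper passes through the inequality $P(\exists\,Z_k\text{ with no common neighbor}\mid\exists\,Z_k)\geq P(\text{a fixed copy has no common neighbor})$, which is not justified as written, whereas your comparison of the two counts---$X_{Z_k}\geq\tfrac12\EV(X_{Z_k})$ by Chebyshev against $B\leq\tfrac14\EV(X_{Z_k})$ by Markov, using $\EV(B)\leq np^{k+1}\EV(X_{Z_k})$ and the independence of the edges at a candidate common neighbor from the edges inside the copy---is the standard rigorous way to finish. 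Your edge count for $Z_k$ ($2(k+1)$ edges labelled $3$, $k+1$ labelled $\infty$, $2(k^2-1)$ labelled $2$) is also correct.

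The gap is the sentence ``The three hypotheses force $\EV(X_{Z_k})\to\infty$.'' They do not, and this is the load-bearing step: the full-overlap term of $\EV(X_{Z_k}^2)/\EV(X_{Z_k})^2$ is comparable to $1/\EV(X_{Z_k})$, so without divergence of the first moment neither your Chebyshev step nor the inequality $X_{Z_k}>B$ can hold. Concretely,
\begin{align*}
\EV(X_{Z_k})\asymp n^{2k+2}\,p_2^{2(k^2-1)}\,p_3^{2k+2}\,p_\infty^{k+1}
=(np_3)^{2k+2}\,p_2^{2(k^2-1)}\,p_\infty^{k+1},
\end{align*}
and the hypothesis $np_3\not\to 0$ permits $np_3=O(1)$, while $n(1-p_\infty)^{k+1}\to 0$ forces $p=1-p_\infty\to 0$ and hence $p_2\to 0$; so for every $k\geq 2$ the right-hand side can tend to $0$ rather than $\infty$. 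For instance $p_2=n^{-2/k}$, $p_3=n^{-1}$, and $p_m=0$ otherwise satisfies all three hypotheses ($n^kp_2^{k\choose 2}=n\to\infty$, $np_3=1$, $np^{k+1}\to 0$) yet gives $\EV(X_{Z_k})\to 0$, so by Markov $\Gamma$ \aas contains no copy of $Z_k$ at all. The asserted implication is therefore false, and the deferred ``case analysis over overlaps'' cannot succeed; one needs something on the order of $np_3\,p_2^{k-1}\to\infty$ (which is what makes $(np_3)^{k+1}p_2^{k^2-1}\to\infty$) before either your argument or the paper's goes through. To be fair, the paper's proof has the same defect---it invokes $p_3/p_2\not\to 0$, which is not among the hypotheses, and never verifies that the expectation diverges---so you have reproduced an error already present in the source rather than introduced a new one; but as written the proof does not close.
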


\begin{proof}
First, we'll show that $\Gamma$ \aas contains a subgraph isomorphic to $Z_k$. For each $k$-tuple
$\alpha$, let $X_\alpha$ be the random variable which takes the value 1 if $\alpha$ is an instance of
$Z_k$ and 0 otherwise. Let $X_k = \sum\limits_\alpha X_\alpha$. Then
\begin{align*}
\frac{\EV(X_k^2)}{\EV(X_k)^2} = 1 + \sum\limits_{i=1}^{2k+2} \sum\limits_{j=0}^{i/2} 
	\sum\limits_{\ell = 0}^i \frac{b_{i,{i\choose 2}-j-\ell,j,\ell}}{ n^i p_2^{{i\choose 2}-j-\ell} p_3^j p_\infty^\ell},
\end{align*}
where $b_{i,{i\choose 2}-j-\ell,j,\ell}$ is the number of ways two $(2k+2)$-tuples can share $i$ 
vertices, ${i\choose 2}-j-\ell$ 2-edges, $j$ 3-edges, and $\ell$ $\infty$-edges.
Since $n^k p_2^{k\choose 2} \to \infty$, also $n^i p_2^{i\choose 2} \to \infty$ for $i<k$, and since 
also $\frac{p_3}{p_2} \not\to 0$ and $n(1-p_\infty)^{k+1} \to 0 \implies p_\infty \not\to 0$, for each $i$
$n^i p_2^{{i\choose 2} - j - \ell} p_3^\ell p_\infty^j 
= n^i p_2^{{i\choose 2 } - j} \left(\frac{p_3}{p_2}\right)^\ell p_\infty^j \to \infty$. So,
$\frac{\EV(X_k^2)}{\EV(X_k)^2} \to 1$, and hence $\Gamma$ \aas contains a subgraph isomorphic to 
$Z_k$. \\
Now, the probability that a particular instance of $Z_k$ is such that the $\{x_i^+\}$ have a common 
neighbor is $n(1-p_\infty)^{k+1} \to 0$; hence the probability that in a particular instance of $Z_k$
the $\{x_i^+\}$ have no common neighbor $\to 1$.
So, the probability that $\Gamma$ contains a $Z_k$ with no
common neighbor for $\{x_i^+\}$ is 
\begin{align*}
P(\exists Z_k \text{ with no common neighbor})
&= P(\exists Z_k)P(\exists Z_k \text{ with no common neighbor}|\exists Z_k)\\
&\geq P(\exists Z_k)P(\text{a particular } Z_k \text{ has no common neighbor}) \to 1.
\end{align*}
\end{proof}

\begin{thm}
Let $\Gamma$ be a random edge-labelled graph whose edges are labelled $m$ with probability 
$p_m(n)$.
Then if $n^kp_2^{k\choose 2} \to \infty$ and $np_3 \not\to 0$ and $n(1-p_\infty)^{k+1} \to 0$, 
the nerve of $W_\Gamma$ \aas has non-trivial $H_k$.
\end{thm}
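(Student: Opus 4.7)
The plan is to combine the three lemmas immediately preceding the theorem. Since all the probabilistic and combinatorial work has been pushed into Lemma \ref{lemma:randomZk}, and the topological content is packaged in the earlier lemma computing $H_k(N(Z_k))$ and in Lemma \ref{lemma:retract}, the theorem itself should reduce to a short functorial argument in homology.

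More concretely, I would first invoke Lemma \ref{lemma:randomZk}: under the hypotheses $n^k p_2^{k\choose 2}\to\infty$, $np_3\not\to 0$, and $n(1-p_\infty)^{k+1}\to 0$, asymptotically almost surely $\Gamma$ contains a subgraph isomorphic to $Z_k$ whose $\{x_i^+\}$ have no common neighbor in $\Gamma$. Conditioning on this event, Lemma \ref{lemma:retract} then produces a simplicial retraction $r\colon N(\Gamma)\to N(Z_k)$, so that if $\iota\colon N(Z_k)\hookrightarrow N(\Gamma)$ denotes the inclusion induced by the subgraph, then $r\circ\iota=\mathrm{id}_{N(Z_k)}$.

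Applying $H_k(-)$, functoriality gives $r_*\circ\iota_*=\mathrm{id}_{H_k(N(Z_k))}$, so $\iota_*\colon H_k(N(Z_k))\to H_k(N(\Gamma))$ is injective. The first lemma of this section exhibits an explicit non-trivial cycle $\sum_\sigma(-1)^{d_\sigma}[x_0^{\sigma(0)},\dots,x_k^{\sigma(k)}]$ representing a non-zero class in $H_k(N(Z_k))$; its image under $\iota_*$ is therefore a non-zero class in $H_k(N(\Gamma))$. Since this holds on the \aas event produced by Lemma \ref{lemma:randomZk}, $N(W_\Gamma)$ \aas has non-trivial $H_k$.

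There is essentially no obstacle in the combination step itself; the only thing worth flagging is the implicit use of the fact that the cycle constructed in the first lemma is not a boundary in $N(Z_k)$ (not merely a cycle), which is why its image under the injective $\iota_*$ remains non-zero in $H_k(N(\Gamma))$. All the real difficulty in the theorem has already been absorbed into establishing the \aas existence of $Z_k$ without a common neighbor for the $\{x_i^+\}$, a second-moment computation carried out in Lemma \ref{lemma:randomZk}.
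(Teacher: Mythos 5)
Your proposal is correct and is essentially identical to the paper's own proof: invoke Lemma \ref{lemma:randomZk} for the \aas existence of a $Z_k$ whose $\{x_i^+\}$ have no common neighbor, then use the retraction of Lemma \ref{lemma:retract} together with the non-triviality of $H_k(N(Z_k))$ to conclude. Your extra remarks on functoriality and injectivity of $\iota_*$ just make explicit what the paper leaves implicit.
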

\begin{proof}
By Lemma \ref{lemma:randomZk}, $\Gamma$ \aas contains a 
subgraph which is an instance of $Z_k$ such that 
the $\{x_i^+\}$ have no common neighbor. So, by Lemma \ref{lemma:retract}, 
$N(\Gamma)$ retracts onto $N(Z_k)$,
which has nontrivial $H_k$.
\end{proof}

\section{Hyperbolicity} \label{hyp}

Another property which is of interest for Coxeter groups is hyperbolicity, defined as follows:

\begin{defn}
\begin{itemize}
	\item For $\delta \geq 0$, a metric space $X$ is $\delta$-hyperbolic if every geodesic triangle in
	$X$ is ``$\delta$-slim''; i.e., if the $\delta$-neighborhood of the union of two sides of the triangle
	contains the third side. 
	\item A metric space $X$ is hyperbolic if it is $\delta$-hyperbolic for some $\delta$.
	\item A group $G$ is hyperbolic if its Cayley graph with respect to some (and hence any)
	finite generating set is a hyperbolic metric space.
\end{itemize}
\end{defn}

In \cite{moussong}, Moussong gives the following condition for when a 
Coxeter group is hyperbolic in this sense:

\begin{thm}[Moussong] %check this + word better
A Coxeter group $W_\Gamma$ is hyperbolic iff the following hold:
\begin{itemize}
	\item $\Gamma$ does not contain a subgraph on 3 or more vertices which generates a  
	Euclidean reflection group in $W_\Gamma$
	\item $\Gamma$ does not contain two disjoint subgraphs $S$ and $T$ such that each of
	$S$ and $T$ generates an infinite subgroup of $\Gamma$ and every pair of vertices 
	$s \in S$ and $t \in T$ is adjacent via an edge labelled 2.
\end{itemize}
\end{thm}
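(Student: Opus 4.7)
The plan is to realize $W_\Gamma$ geometrically as the isometry group of its Davis complex $\Sigma_\Gamma$, endow $\Sigma_\Gamma$ with a suitable piecewise Euclidean metric, and invoke the Flat Plane Theorem of Bridson-Haefliger: a proper CAT(0) space admitting a geometric group action is Gromov hyperbolic if and only if it contains no isometrically embedded Euclidean plane. Since $W_\Gamma$ acts properly and cocompactly on $\Sigma_\Gamma$, proving the theorem reduces to characterizing the combinatorial configurations in $\Gamma$ that force a flat plane in $\Sigma_\Gamma$.

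For the (easier) forward direction, I would show that each forbidden configuration produces a $\mathbb{Z}^2$ subgroup of $W_\Gamma$, which in turn gives a quasi-isometrically embedded flat obstructing hyperbolicity. If a subgraph $T$ on three or more vertices generates a Euclidean reflection group, then $W_T$ is virtually $\mathbb{Z}^n$ for some $n \geq 2$; since subsets of vertices generate parabolic subgroups that embed isomorphically in $W_\Gamma$, we get $\mathbb{Z}^2 \hookrightarrow W_\Gamma$. If $\Gamma$ contains disjoint subgraphs $S$ and $T$ each generating an infinite Coxeter subgroup with all cross-edges labelled $2$, then $W_{S \cup T} = W_S \times W_T$; each infinite Coxeter group contains an element of infinite order (via the Tits representation), so once again $\mathbb{Z}^2 \hookrightarrow W_\Gamma$.

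For the reverse direction, I would follow Moussong's strategy. First, equip $\Sigma_\Gamma$ with the Moussong metric, in which each Coxeter cell associated to a finite parabolic subgroup $W_T$ is modeled on a convex Euclidean polytope whose dihedral angles are dictated by the edge labels of $T$. To verify that this metric is CAT(0), apply Gromov's link condition at each vertex: the link is isometric to the nerve $N(\Gamma)$ equipped with a piecewise spherical ``all-right'' metric, and the link condition reduces to showing that every isometrically embedded loop in this metric nerve has length at least $2\pi$. With CAT(0) in hand, one then rules out flat planes: any flat in $\Sigma_\Gamma$ would (via the flat torus theorem applied to its $\mathbb{Z}^2$ stabilizer) force the existence of two commuting infinite parabolic subgroups joined entirely by $2$-labelled edges, which is precisely the second forbidden configuration.

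The main obstacle, and the technical heart of Moussong's argument, is verifying the large-link condition for the piecewise spherical nerve. This is a delicate combinatorial-geometric case analysis that proceeds by induction on rank and uses the classification of irreducible spherical and affine Coxeter diagrams (see Figure \ref{fig:finitetable}) to identify exactly which local configurations can produce a short geodesic loop of length less than $2\pi$; it turns out that such loops exist if and only if the underlying subgraph is affine of rank at least $3$, which matches the first forbidden configuration exactly. Once this is established, the passage from CAT(0) plus absence of flat planes to $\delta$-hyperbolicity is supplied directly by the Flat Plane Theorem, completing the equivalence.
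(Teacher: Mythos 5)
The paper gives no proof of this statement—it is quoted from Moussong's thesis—so your sketch has to stand on its own merits. The global architecture (Davis complex with the Moussong metric, CAT(0) via Gromov's link condition, Flat Plane Theorem) is indeed the right one, and your forward direction is sound: each forbidden configuration yields a $\mathbb{Z}^2$ subgroup, which is incompatible with hyperbolicity. But the reverse direction as written has two genuine gaps. First, you locate the dichotomy in the wrong part of the argument: you claim that geodesic loops of length less than $2\pi$ in the piecewise spherical nerve exist exactly when $\Gamma$ contains an affine subdiagram of rank at least $3$. In fact Moussong's central lemma shows that the nerve—with the piecewise spherical metric whose edge lengths are $\pi - \pi/m_{ij}$, not the ``all-right'' metric, which is only correct in the right-angled case—is large for \emph{every} Coxeter group, so $\Sigma_\Gamma$ is CAT(0) unconditionally. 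Affine subdiagrams and join configurations produce closed geodesics of length exactly $2\pi$, which do not violate the link condition but do permit flats; the hyperbolicity dichotomy therefore lives entirely in the flat-plane analysis, not in the CAT(0) verification. (Note also that if only affine subdiagrams produced the obstruction, the second condition of the theorem would be redundant.)

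Second, your mechanism for ruling out flat planes is circular. The Flat Plane Theorem hands you an isometrically embedded copy of $\mathbb{E}^2$ with no periodicity whatsoever, so there is no $\mathbb{Z}^2$ stabilizer to which the Flat Torus Theorem could be applied. Deducing a $\mathbb{Z}^2$ subgroup from the mere existence of a flat is, for general CAT(0) groups, the open ``flat closing'' problem; for Coxeter groups it is a \emph{consequence} of Moussong's theorem, not an available input. What Moussong actually does is show that, under the two combinatorial hypotheses, the nerve and all of its links are ``extra large'' (every closed geodesic has length bounded strictly above $2\pi$), which allows either a direct argument that no flat embeds or a deformation of the Moussong metric to a piecewise hyperbolic CAT$(-\epsilon)$ metric. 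That case analysis over the spherical and affine diagrams is the real content of the theorem, and your outline defers it while simultaneously misstating what it needs to establish; as written, the reverse implication is not proved.
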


These conditions are easy to check because the finite Coxeter groups and Euclidean reflection 
groups are well understood. The tables of both are given in the Appendix for reference 
(the finite Coxeter groups in Table \ref{fig:finitetable} and the Euclidean reflection groups in Table
\ref{fig:affinetable}).

\begin{lemma}
 If $n^4 p_2^4 p_\infty^2 \to \infty$, then $\Gamma$ \aas contains the following subgraph:
\begin{tikzpicture}
\draw[fill] (0,0) circle [radius = 0.05];
\draw[fill] (1,0) circle [radius = 0.05];
\draw[fill] (0,1) circle [radius = 0.05];
\draw[fill] (1,1) circle [radius = 0.05];
\draw (0,0) -- (1,0);
\draw (1,0) -- (1,1);
\draw (1,1) -- (0,1);
\draw (0,1) -- (0,0);
\draw [dashed] (0,0) -- (1,1);
\draw [dashed] (0,1) -- (1,0);
\end{tikzpicture}, where the solid edges are labelled 2 and the dashed edges are labeled $\infty$.
\label{lemma:emptysquare}
\end{lemma}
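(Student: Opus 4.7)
The plan is to apply the second moment method, following the template used in Lemma~\ref{lemma:randomZk} and part~(iii) of Theorem~\ref{thm:dim}. Denote the target 4-vertex edge-labeled graph (the $K_4$ with four 2-edges forming a square and two $\infty$-edges as diagonals) by $\Gamma'$. For each 4-tuple $\alpha$ of distinct vertices of $\Gamma$, let $X_\alpha$ be the indicator that $\alpha$ is an instance of $\Gamma'$, and set $X = \sum_\alpha X_\alpha$. By the setup in Section~2, $\EV(X) \sim \frac{1}{b} n^4 p_2^4 p_\infty^2 \to \infty$ under the hypothesis, so it will suffice to show $\EV(X^2)/\EV(X)^2 \to 1$ and then apply Chebyshev's inequality to conclude that $X \geq 1$ asymptotically almost surely.

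Following the same bookkeeping, the ratio takes the form
\[\frac{\EV(X^2)}{\EV(X)^2} = 1 + \sum_{\ell=1}^{4} \sum_{(a_2, a_\infty)} \frac{b_{\ell, a_2, a_\infty}}{n^\ell p_2^{a_2} p_\infty^{a_\infty}},\]
where the inner sum runs over pairs $(a_2, a_\infty)$ such that $\Gamma'$ admits a subgraph on $\ell$ vertices with $a_2$ edges labeled $2$ and $a_\infty$ edges labeled $\infty$. The task is to show that each denominator $n^\ell p_2^{a_2} p_\infty^{a_\infty}$ tends to infinity. The hypothesis $n^4 p_2^4 p_\infty^2 \to \infty$ immediately yields $np_2 \to \infty$ (since $p_\infty \leq 1$, take a fourth root) and $n^2 p_\infty \to \infty$ (since $p_2 \leq 1$, take a square root), and combining these two facts handles most admissible triples directly.

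The case that requires a touch of care is $\ell = 3$, $(a_2,a_\infty) = (2,1)$, corresponding to sharing a length-two path of 2-edges together with the $\infty$-diagonal joining its endpoints: here one observes that $n^3 p_2^2 p_\infty = n \cdot \sqrt{n^4 p_2^4 p_\infty^2}$, which diverges by hypothesis. The $\ell = 4$ case, in which the two copies share all four vertices and hence all edges, reproduces the hypothesis itself. After checking that the remaining admissible triples at $\ell \in \{1,2,3\}$ reduce to products of nonnegative powers of $n$, $np_2$, and $n^2 p_\infty$, the second moment calculation is complete and the lemma follows.
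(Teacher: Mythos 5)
Your proposal is correct and follows essentially the same route as the paper: a second-moment computation over shared-vertex/shared-edge configurations, with the divergence of each denominator ($n$, $n^2p_2$, $n^2p_\infty$, $n^3p_2^2p_\infty$, $n^4p_2^4p_\infty^2$) deduced from the single hypothesis $n^4p_2^4p_\infty^2\to\infty$. Your derivations (e.g.\ $np_2\to\infty$ via $p_\infty\le 1$ and $n^3p_2^2p_\infty = n\sqrt{n^4p_2^4p_\infty^2}$) match the paper's factorizations in substance.
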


\begin{proof}
If $X$ is the number of such subgraphs, we have:
\begin{align*}
\frac{\EV(X^2)}{\EV(X)^2} = 1 + \frac{b_{1,0}}{n} + \frac{b_{2,1}}{n^2p_2} + \frac{b_{2,0}}{n^2p_\infty}
+ \frac{b_{3,2}}{n^3 p_2^2 p_\infty} + \frac{b_{4,4}}{n^4 p_2^4 p_\infty^2},
\end{align*}
where $b_{i,j}$ is the number of ways two instances of the subgraph in question can share 
$i$ vertices and $j$ 2-edges.
To show this $\to 1$, we need to show that the denominator of each term besides the first 
$\to \infty$. Since $n^4p_2^4 p_\infty^2 \to \infty$:
\begin{itemize}
\item $n^2 p_2 \to \infty$ since $n^4 p_2^4 p_\infty^2 = (n^2p_2)^2 (p_2^2 p_\infty^2)$
\item $n^2 p_\infty \to \infty$ since $n^4 p_2^4 p_\infty^2 = (n^2 p_\infty)^2 (p_2^4)$
\item $n^3 p_2^2 p_\infty \to \infty$ since 
	$n^4 p_2^4 p_\infty^2 = (n^3 p_2^2 p_\infty)(n p_2^2 p_\infty)$.
\end{itemize}
\end{proof}

\begin{thm}
 If $n^4 p_2^4 p_\infty^2 \to \infty$, then $W_\Gamma$ is \aas not hyperbolic.
\end{thm}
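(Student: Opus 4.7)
The plan is to apply Moussong's theorem directly, using Lemma \ref{lemma:emptysquare} to produce the forbidden configuration. Specifically, I would invoke the second condition in Moussong's criterion: $W_\Gamma$ fails to be hyperbolic if $\Gamma$ contains two disjoint subgraphs $S$ and $T$, each generating an infinite subgroup, with every cross-pair $s \in S$, $t \in T$ joined by a $2$-labelled edge.

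First I would recall from Lemma \ref{lemma:emptysquare} that under the hypothesis $n^4 p_2^4 p_\infty^2 \to \infty$, the graph $\Gamma$ \aas contains the $4$-vertex subgraph whose perimeter (four edges) is labelled $2$ and whose two diagonals are labelled $\infty$. Label these vertices so that the $\infty$-diagonals are $\{a_1,a_2\}$ and $\{b_1,b_2\}$, and the $2$-labelled edges are exactly the four pairs $\{a_i,b_j\}$. Then I would set $S = \{a_1,a_2\}$ and $T = \{b_1,b_2\}$. Each of $S$ and $T$ generates an infinite dihedral subgroup of $W_\Gamma$, since two generators joined by an $\infty$-labelled edge satisfy no braid relation. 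The sets are disjoint, and by construction every pair $s \in S$, $t \in T$ is connected by a $2$-labelled edge.

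Thus the second bullet of Moussong's theorem is violated on the subgraph, so $W_\Gamma$ is not hyperbolic. Since this happens \aas, the conclusion follows.

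There is essentially no obstacle here: the combinatorial work is all absorbed into Lemma \ref{lemma:emptysquare}, and the present theorem is simply the observation that the subgraph furnished by that lemma is exactly the obstruction in Moussong's criterion. The only thing to be careful about is recording explicitly why $S$ and $T$ generate infinite subgroups (the $\infty$ label on the single edge between the two vertices of each) and why the four cross-edges all have label $2$ (by definition of the subgraph in the lemma).
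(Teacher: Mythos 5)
Your proposal is correct and follows essentially the same route as the paper: invoke Lemma \ref{lemma:emptysquare} to obtain the square with $2$-labelled sides and $\infty$-labelled diagonals, then observe that the two diagonals give disjoint infinite (dihedral) subgroups joined entirely by $2$-edges, violating the second condition of Moussong's criterion. You simply spell out the identification of $S$ and $T$ more explicitly than the paper does.
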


\begin{proof}
By Lemma \ref{lemma:emptysquare}, 
$\Gamma$ \aas has a subgraph isomorphic to a square with edges labelled 2 whose 
diagonals are labeled $\infty$. This corresponds to a direct product of two infinite subgroups, which 
by Moussong's condition means that $W_\Gamma$ is \aas not hyperbolic.
\end{proof}

\begin{thm}
 If $np_3 \to \infty$, then $W_\Gamma$ is \aas not hyperbolic.
\end{thm}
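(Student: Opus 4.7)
The plan is to exhibit a finite subgraph of $\Gamma$ whose vertices generate a Euclidean reflection group inside $W_\Gamma$, and then invoke Moussong's criterion. The simplest such configuration available when many 3-labelled edges are present is a triangle with all three edges labelled 3: this is the Coxeter diagram of the affine Weyl group $\tilde{A}_2$, which is a Euclidean reflection group (see Table \ref{fig:affinetable}). So I would aim to show that when $np_3 \to \infty$, $\Gamma$ \aas contains a triangle in which every edge is labelled 3.

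To establish this I would apply the second moment method using the machinery set up in Section~2. Let $X$ be the number of ordered triples of vertices of $\Gamma$ whose induced subgraph is a 3-labelled triangle. Then
\begin{align*}
\EV(X) \sim n^3 p_3^3,
\end{align*}
which tends to $\infty$ since $np_3 \to \infty$. For the variance, the contributions to $\EV(X^2)/\EV(X)^2$ come from pairs of such triangles sharing $\ell \in \{0,1,2,3\}$ vertices; the $\ell = 0$ term is $1$, and the remaining terms have the form
\begin{align*}
\frac{b_{1,0}}{n}, \qquad \frac{b_{2,1}}{n^{2} p_3}, \qquad \frac{b_{3,3}}{n^{3} p_3^{3}},
\end{align*}
since when two triangles share two vertices they must share the 3-edge between them. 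Each of these tends to $0$ because $n \to \infty$ and $np_3 \to \infty$ (note $(np_3)^3 = n^3 p_3^3 \to \infty$ dominates the $\ell = 3$ term, and $n \cdot np_3 = n^2 p_3 \to \infty$ dominates the $\ell = 2$ term). Therefore $\EV(X^2)/\EV(X)^2 \to 1$, so by Chebyshev's inequality $X > 0$ \aas, meaning $\Gamma$ \aas contains a 3-labelled triangle.

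A 3-labelled triangle in $\Gamma$ generates a subgroup of $W_\Gamma$ isomorphic to the Euclidean reflection group $\tilde{A}_2$, a subgraph on 3 vertices. By Moussong's theorem, this forces $W_\Gamma$ to be non-hyperbolic, completing the argument. There is no real obstacle here; the only point requiring any thought is choosing the smallest Euclidean Coxeter diagram supported on 3-edges (so that the threshold $np_3 \to \infty$ suffices), and verifying that the shared-edge term in the second moment is controlled by $np_3$ rather than by a higher power.
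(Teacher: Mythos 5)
Your proof is correct and takes essentially the same approach as the paper: exhibit a triangle with all edges labelled $3$ (the diagram of $\tilde{A}_2$) and invoke Moussong's criterion. The only difference is that the paper establishes the existence of such a triangle by observing that the $3$-labelled subgraph $\Gamma_3$ is an Erd\H{o}s--R\'enyi random graph with edge probability $p_3$ and citing the standard triangle-threshold result, whereas you carry out the second-moment computation explicitly; your computation is correct, including the observation that the $\ell=2$ term involves a shared $3$-edge and is controlled by $n^2 p_3$.
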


\begin{proof}
If $np_3 \to \infty$, then $\Gamma$ \aas contains a 3-labelled triangle: Let $\Gamma_3$ be the
(unlabelled) graph whose vertices are the vertices of $\Gamma$ and whose edges are those which 
are labelled 3 in $\Gamma$. Then $\Gamma_3$ can be interpreted as an Erd\"os-R\'enyi random
graph with edge probability $p(n) = p_3(n)$, and $\Gamma$ contains a 3-labelled triangle if and 
only if $\Gamma_3$ contains a triangle. So, it suffices to show that a random graph with 
$np \to \infty$ contains a triangle, which is a standard result (see \cite{bollobas}).
By Moussong's condition, this means
$W_\Gamma$ is \aas not hyperbolic.
\end{proof}

\begin{thm}
If $np_4 \to \infty$ and $np_2 \not\to 0$, then $W_\Gamma$ is \aas not hyperbolic.
\end{thm}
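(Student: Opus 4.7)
The plan is to apply Moussong's criterion by showing $\Gamma$ \aas contains a subgraph on three vertices that generates a Euclidean reflection group in $W_\Gamma$. The natural candidate under these hypotheses is the Coxeter diagram of $\tilde{C}_2$: three vertices $v_1, v_2, v_3$ with edges $(v_1, v_2)$ and $(v_2, v_3)$ labelled $4$ and the edge $(v_1, v_3)$ labelled $2$ (see Table~\ref{fig:affinetable}).

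First I would compute the first moment. Letting $X$ count the ordered triples realizing this configuration, the setup of Section~2 gives
\[
\EV(X) \sim \tfrac{1}{2}\, n^3 p_4^2 p_2 = \tfrac{1}{2} (np_4)^2 (np_2),
\]
which tends to infinity since $(np_4)^2 \to \infty$ and $np_2 \not\to 0$.

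Next I would run a second-moment argument in exactly the style of Lemma~\ref{lemma:emptysquare}. Enumerating by the number $\ell$ of shared vertices (and, for $\ell \geq 2$, by which edges are shared), the ratio $\EV(X^2)/\EV(X)^2$ equals $1$ plus a finite sum of terms of the form $b_{\ldots}/(n^\ell p_4^a p_2^b)$, with the $\ell = 3$ term having denominator $n^3 p_4^2 p_2 = (np_4)^2(np_2)$. Each denominator factors as a product of positive powers of $n$, $np_4$, and $np_2$, all of which tend to infinity under the hypotheses, so every error term vanishes and Chebyshev yields $X > 0$ \aas. Moussong's criterion, applied to the rank-$3$ Euclidean reflection group $\tilde{C}_2$, then gives the desired conclusion.

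The one step requiring a little care — bookkeeping rather than a genuine obstacle — is the overlap enumeration: because $\tilde{C}_2$ has a reflection symmetry exchanging its two leaves, two copies can overlap on a pair of vertices via either a shared $4$-edge or the (implicit) shared $2$-edge, producing separate terms with denominators $n^2 p_4$ and $n^2 p_2$ respectively. Both denominators grow under the hypotheses, as do those at $\ell = 1$ and $\ell = 3$, so the argument goes through without complication.
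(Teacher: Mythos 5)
Your proposal is correct and follows essentially the same route as the paper: a second-moment count of triangles with edge labels $4,4,2$, with every overlap term's denominator a product of powers of $n$, $np_4$, and $np_2$, followed by Moussong's criterion. The only cosmetic discrepancy is that the paper's Table~\ref{fig:affinetable} labels this rank-$3$ diagram $\tilde{B}_2$ rather than $\tilde{C}_2$ (the underlying group is the same).
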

\begin{proof}
For each 3-tuple of vertices $\alpha = (v_1,v_2,v_3)$, let $X_\alpha$ be the random variable which
takes the value 1 if edges $(v_1,v_2)$ and $(v_2,v_3)$ are labelled 4 and edge $(v_1,v_3)$ is 
labelled 2. Let $X = \sum\limits_\alpha$. Then
\begin{align*}
\frac{\EV(X^2)}{\EV(X)^2} = 1 + \frac{b_{1,0}}{n} + \frac{b_{2,0}}{n^2p_2} + \frac{b_{2,1}}{n^2p_4}
+ \frac{b_{3,2}}{n^3p_4^2p_2},
\end{align*}
where $b_{i,j}$ is the number of ways two $3$-tuples can share $i$ vertices and $j$ 4-edges.
This $\to 1$ since $n \to \infty$, $n^2p_2 = n(np_2) \to \infty$, $n^2p_4 \to \infty$, and 
$n^3p_4^2p_2 = (np_4)^2(np_2)$. So, $\Gamma$ \aas contains a triangle whose edges are
labelled 4, 4, 2; hence is not hyperbolic by Moussong's condition.
\end{proof}

\begin{thm}
If $np_6 \to \infty$ and $np_3 \not\to 0$ and $np_2 \not\to 0$, then $W_\Gamma$ is \aas not 
hyperbolic.
\end{thm}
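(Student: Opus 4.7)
The plan is to mirror the strategy of the preceding theorems: produce, via a second-moment argument, a triangle with edge labels $(6,3,2)$, which generates the affine reflection group $\tilde G_2$ (a Euclidean reflection group), and then invoke Moussong's criterion to conclude that $W_\Gamma$ is \aas not hyperbolic.

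First I would set up the random variable. For each ordered triple $\alpha = (v_1, v_2, v_3)$ of distinct vertices of $\Gamma$, let $X_\alpha$ be the indicator that the edge $(v_1,v_2)$ is labelled $6$, the edge $(v_2,v_3)$ is labelled $3$, and the edge $(v_1,v_3)$ is labelled $2$, and let $X = \sum_\alpha X_\alpha$. By independence of edge labels, $\EV(X) \sim n^3 p_6 p_3 p_2$, which tends to $\infty$ under the hypotheses.

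Next I would run the standard second-moment computation, as for the $(4,4,2)$-triangle. Because the three target edges carry distinct labels, pairs of triples are classified by the number $\ell$ of shared vertices together with (when $\ell = 2$) the label of the single shared edge. The resulting expansion has the form
\begin{align*}
\frac{\EV(X^2)}{\EV(X)^2} = 1 + \frac{b_{1,0,0,0}}{n} + \frac{b_{2,1,0,0}}{n^2 p_6} + \frac{b_{2,0,1,0}}{n^2 p_3} + \frac{b_{2,0,0,1}}{n^2 p_2} + \frac{b_{3,1,1,1}}{n^3 p_6 p_3 p_2},
\end{align*}
where the $b$'s are bounded constants tracking shared vertices and shared $(6,3,2)$-edges. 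I would then check that every denominator tends to $\infty$: $n^2 p_m = n(np_m)$ for $m \in \{2,3,6\}$, and $n^3 p_6 p_3 p_2 = (np_6)(np_3)(np_2)$, all of which go to $\infty$ because $np_6 \to \infty$ while $np_3$ and $np_2$ stay bounded away from $0$. Chebyshev's inequality then yields that $\Gamma$ \aas contains a $(6,3,2)$-triangle.

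Finally I would appeal to Moussong's theorem: the triangle with edge-labels $(6,3,2)$ appears in the table of Euclidean reflection groups (as $\tilde G_2$), so its presence as an induced subgraph of $\Gamma$ forces $W_\Gamma$ to be non-hyperbolic. I do not expect any genuine obstacle; the proof is essentially routine and parallel to the $(4,4,2)$-case. The only point requiring care is bookkeeping in the second-moment expansion, but this is easy because the $(6,3,2)$-triangle has trivial automorphism group, so each overlap pattern is realized in an unambiguous way.
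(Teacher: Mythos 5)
Your proposal is correct and matches the paper's proof essentially verbatim: the same second-moment argument for the $(6,3,2)$-labelled triangle, with the same verification that each overlap term vanishes, followed by the same appeal to Moussong's criterion via $\tilde{G}_2$.
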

\begin{proof}
For each 3-tuple of vertices $\alpha = (v_1,v_2,v_3)$, let $X_\alpha$ be the random variable which
takes the value 1 if edge $(v_1,v_2)$ is labelled 6, edge $(v_2,v_3)$ is labelled 3, and edge 
$(v_1,v_3)$ is labelled 2. Let $X = \sum\limits_{\alpha} X_\alpha$. Then
\begin{align*}
\frac{\EV(X^2)}{\EV(X)^2} = 1 + \frac{b_{1,0,0}}{n} + \frac{b_{2,1,0}}{n^2p_6} + \frac{b_{1,0,1}}{n^2p_3}
+ \frac{b_{2,0,0}}{n^2p_2} + \frac{b_{3,1,1}}{n^3 p_6p_3p_2},
\end{align*}
where $b_{v,e_6,e_3}$ is the number of ways two 3-tuples can share $v$ vertices, $e_6$ 6-edges, 
and $e_3$ 3-edges. This $\to 1$ since $n \to \infty$, $n^2p_6 \to \infty$, $n^2p_3 \to \infty$, 
$n^2 p_2 \to \infty$, and $n^3 p_6p_3p_2 = (np_6)(np_3)(np_2) \to \infty$. So, $\Gamma$ \aas 
contains a triangle whose edges are labelled 6, 3, 2; hence is not hyperbolic by Moussong's condition.
\end{proof}

\begin{lemma} If $np_3 \to 0$, $np_2 \not\to \infty$, $np_4 \not\to \infty$, and $np_6 \not\to \infty$,
and at least one of $np_2$ or $np_4\to 0$, 
then $W_\Gamma$ \aas contains no subgraph on at least three vertices corresponding to
a Euclidean reflection group.
\label{lemma:noaffine}
\end{lemma}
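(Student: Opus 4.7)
The plan is to apply Markov's inequality: I show that the expected total number of induced subgraphs of $\Gamma$ whose Coxeter diagram is a Euclidean reflection group on $\geq 3$ vertices tends to $0$. Using the classification (Table \ref{fig:affinetable}), the Euclidean reflection groups are the cycles $\tilde A_n$ ($n \geq 2$) and trees of types $\tilde B_n$, $\tilde C_n$, $\tilde D_n$, $\tilde E_6$, $\tilde E_7$, $\tilde E_8$, $\tilde F_4$, $\tilde G_2$. The structural facts I would use are: every edge label lies in $\{2,3,4,6\}$; on $k$ vertices a Euclidean diagram has $k-1$ non-$2$ edges (tree case) or $k$ non-$2$ edges (cycle $\tilde A_{k-1}$); the number of $4$-labeled edges is at most $2$ and the number of $6$-labeled edges is at most $1$; and every Euclidean diagram on $\geq 3$ vertices except $\tilde C_2$ contains at least one $3$-edge.

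I would split into three cases. For the cycle $\tilde A_{k-1}$ on $k \geq 3$ vertices, the expected count is bounded above by $(np_3)^k$, and $\sum_{k \geq 3}(np_3)^k \to 0$ since $np_3 \to 0$. For the special diagram $\tilde C_2$ (three vertices, two $4$-edges, one $2$-edge), the expected count is
\[
\sim n^3 p_4^2 p_2 \;=\; (np_4)^2\,(np_2),
\]
which tends to $0$ since, by hypothesis, at least one of $np_2$ or $np_4$ tends to $0$ while the other is bounded.

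For every remaining Euclidean diagram---a tree on $k \geq 3$ vertices with $e_3 \geq 1$ three-edges, $e_4 \leq 2$ four-edges, $e_6 \leq 1$ six-edges, and $e_3 + e_4 + e_6 = k-1$---the expected count is
\[
\sim n^k p_3^{e_3} p_4^{e_4} p_6^{e_6} p_2^{\binom{k}{2} - k + 1} \;=\; (np_3)^{e_3}(np_4)^{e_4}(np_6)^{e_6}(np_2)^{\binom{k}{2}-k+1}\;n^{k-\binom{k}{2}}.
\]
The factors $(np_m)^{e_m}$ for $m \in \{2,4,6\}$ are bounded by hypothesis, $n^{k-\binom{k}{2}} \leq 1$ for $k \geq 3$, and $(np_3)^{e_3} \to 0$ because $e_3 \geq 1$ and $np_3 \to 0$. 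Hence each individual expected count tends to $0$.

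The main obstacle is carrying out the union bound over infinitely many diagrams. I would handle this by noting that for each fixed $k$ only a bounded number (at most five) of Euclidean diagrams exist on $k$ vertices. For the tree case, $e_3 \geq k-3$ because $e_4+e_6 \leq 2$, so $(np_3)^{e_3} \leq (np_3)^{k-3}$ decays geometrically in $k$ once $np_3 < 1$; combined with the factor $n^{k-\binom{k}{2}}$, which dominates any growth in the bounded factors $(np_2)^{\binom{k}{2}-k+1}$, this yields a summable bound in $k$ whose total is $O(np_3)$ and hence tends to $0$. Together with the cycle estimate, the total expected count tends to $0$, and Markov's inequality gives the conclusion.
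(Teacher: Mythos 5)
Your proof is correct and follows essentially the same route as the paper: a first-moment (Markov) bound on the expected number of each irreducible Euclidean diagram, with the geometric series in $np_3$ controlling the sum over ranks and the rank-two diagram with two $4$-edges (your $\tilde C_2$, the paper's $\tilde B_2$) treated as the one special case requiring the hypothesis that $np_2$ or $np_4 \to 0$. The only difference is organizational --- you bound all tree-type diagrams uniformly via $e_3 \geq k-3$ rather than going family by family --- which is a harmless consolidation of the same computation.
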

\begin{proof}
We will show that each of the Euclidean reflection groups \aas does not appear in $\Gamma$:
\begin{itemize}
\item The expected number of $\tilde{A}_{k-1}$ is $n^k p_3^k p_2^{{k\choose 2} - k}$, so the total number
over all $k$ is 
\begin{align*}
\sum\limits_{k=3}^\infty n^k p_3^k p_2^{{k \choose 2} - k} \leq \sum\limits_{k=3}^\infty (np_3)^k
= \frac{np_3}{1-np_3} - np_3 - (np_3)^2 \to 0
\end{align*}
since $np_3 \to 0$.
\item The expected number of $\tilde{B}_{k-1}$ is $n^k p_4 p_3^{k-2} p_2^{{k\choose 2} - k + 1}$, so the
total number over all $k$ is 
\begin{align*}
\sum\limits_{k=4}^\infty n^k p_4 p_3^{k-2} p_2^{{k\choose 2} - k + 1} 
\leq (np_2)^2 \sum\limits_{k=4}^\infty (np_3)^{k-2}
= (np_2)^2 \left(\frac{np_3}{1-np_3} - np_3\right) \to 0
\end{align*}
since $np_3 \to 0$ and $np_2 \not\to \infty$.
\item The expected number of $\tilde{C}_{k-1}$ is $n^k p_4^2 p_3^{k-2} p_2^{{k\choose 2} - k + 1}$, so 
the total number over all $k$ is 
\begin{align*}
\sum\limits_{k=4}^\infty n^k p_4^2 p_3^{k-2} p_2^{{k\choose 2} - k + 1}
\leq (np_2)^3 \sum\limits_{k=4}^\infty (np_3)^k-3 
= (np_2)^3 \left(\frac{np_3}{1-np_3}\right) \to 0
\end{align*}
since $np_3 \to 0$ and $np_2 \not\to \infty$.
\item The expected number of $\tilde{D}_{k-1}$ is $n^k p_3^{k-1} p_2^{{k\choose 2} - k + 1}$, so the total
number over all $k$ is
\begin{align*}
\sum\limits_{k=6}^\infty n^k p_3^{k-1}p_2^{{k\choose 2} - k + 1} 
&\leq (np_2) \sum\limits_{k=6}^\infty (np_3)^{k-1}\\
&= (np_2)\left(\frac{np_3}{1-np_3} - np_3 - (np_3)^2 - (np_3)^3 - (np_3)^4\right) \to 0
\end{align*}
since $np_3 \to 0$ and $np_2 \not\to \infty$.
\item The expected number of $\tilde{B}_2$ is $n^3 p_4^2 p_2 = (np_4)^2(np_2) \to 0$ since 
$np_4 \not\to \infty$ and $np_2 \not\to \infty$ and one of them $\to 0$.
\item The expected number of $\tilde{G}_2$ is $n^3 p_6 p_3 p_2 = (np_6)(np_3)(np_2) \to 0$ since
$np_3 \to 0$, $np_6 \not\to\infty$ and $np_2 \not\to\infty$.
\item The expected number of $\tilde{F}_4$ is $n^5p_4p_3^3p_2^6 \leq (np_3)^3(np_2)^2 \to 0$ since
$np_3 \to 0$ and $np_2 \not\to \infty$.
\item The expected number of $\tilde{E}_{k-1}$ for $k = 7,8,9$ is $n^k p_3^{k-1}p_2^{{k\choose 2} - k + 1} \leq (np_2)(np_3)^{k-1} \to 0$ since $np_3 \to 0$ and 
$np_2 \not\to \infty$.
\end{itemize}
The expected number of subgraphs on at least three vertices corresponding to an irreducible
affine Coxeter
group is the sum of the above, and hence $\to 0$. So, $\Gamma$ \aas contains no such subgraph, 
and hence \aas contains no subgraph on at least three vertices corresponding to any affine 
Coxeter group.
\end{proof}

\begin{lemma}
 If $np_2 \to 0$, then $\Gamma$ \aas does not contain any subgraph corresponding to a 
direct product of infinite parabolic subgroups of $W_\Gamma$.
\label{lemma:noproduct}
\end{lemma}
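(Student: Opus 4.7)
The approach I would take is a simple first-moment reduction to a small forbidden subgraph. The key preliminary observation is that every infinite parabolic subgroup of $W_\Gamma$ requires at least two generators, since a single vertex generates $\mathbb{Z}/2\mathbb{Z}$, which is finite. Consequently, a subgraph corresponding to a direct product of two or more infinite parabolic subgroups must contain two disjoint vertex sets $S, T \subset V(\Gamma)$, each of size at least $2$, such that every edge between $S$ and $T$ is labelled $2$ (this last condition being what produces the commuting-factor structure).

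From this I conclude that if such a direct product exists in $\Gamma$, then picking any two vertices from $S$ and any two from $T$ produces four vertices whose crossing edges form a $K_{2,2}$ all of whose four edges are labelled $2$. So it suffices to show that \aas $\Gamma$ contains no such $2$-labelled $K_{2,2}$ at all.

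Applying the general expected-value framework from Section~2 to $\Gamma' = K_{2,2}$ with all four edges labelled $2$, the expected number of $2$-labelled $K_{2,2}$ subgraphs is asymptotic to $\tfrac{1}{b} n^4 p_2^4 = O((np_2)^4)$ for an appropriate symmetry constant $b$. Since $np_2 \to 0$ by hypothesis, this expected count tends to $0$, and Markov's inequality then yields that \aas no such $K_{2,2}$ appears in $\Gamma$, which finishes the argument.

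I do not expect a serious obstacle. The only real point to be careful about is the initial reduction, namely justifying that two of the parabolic factors must each contribute at least two vertices to the subgraph; this is immediate from the classification of finite Coxeter groups (equivalently, any generator has order $2$, so a single generator never gives an infinite subgroup). Everything after that is a routine expected-value estimate of the kind performed elsewhere in the paper.
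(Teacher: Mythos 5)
Your proof is correct and takes essentially the same route as the paper: reduce, via the observation that each infinite parabolic factor needs at least two generators, to a four-vertex configuration whose four crossing edges are labelled $2$, and then apply the first-moment method to get an expected count of order $(np_2)^4 \to 0$. The only (harmless) difference is that you count $2$-labelled $K_{2,2}$'s while ignoring the labels of the two internal edges, whereas the paper retains those labels as $p_{m_1}p_{m_2}$ and bounds the expectation by the same quantity.
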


\begin{proof}
Any infinite parabolic subgroup of $W_\Gamma$ must have at least two generators, so it suffices 
to show $\Gamma$ \aas does not contain any direct product of two edges with any labels $m_1$
and $m_2$. The expected number of such subgraphs is $n^4 p_2^4 p_{m_1} p_{m_2}
\leq (np_2)^4 \to 0$ since $np_2 \to 0$.
\end{proof}

\begin{thm}
 If  $np_3 \to 0$, $np_2 \to 0$, $np_4 \not\to \infty$, and $np_6 \not\to \infty$, then $W_\Gamma$ is \aas hyperbolic.
 \end{thm}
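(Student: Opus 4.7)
The plan is to verify both halves of Moussong's criterion hold asymptotically almost surely, one via Lemma \ref{lemma:noaffine} and the other via Lemma \ref{lemma:noproduct}.

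First, I would check that the hypotheses of Lemma \ref{lemma:noaffine} are satisfied under the assumption of the theorem. That lemma requires $np_3 \to 0$, $np_2 \not\to \infty$, $np_4 \not\to \infty$, $np_6 \not\to \infty$, and at least one of $np_2, np_4$ to tend to $0$. Since we are given $np_3 \to 0$, $np_2 \to 0$, $np_4 \not\to \infty$, and $np_6 \not\to \infty$, and since $np_2 \to 0$ clearly implies both $np_2 \not\to \infty$ and gives us one of the two quantities tending to $0$, all hypotheses of Lemma \ref{lemma:noaffine} are met. Hence \aas $\Gamma$ contains no subgraph on at least three vertices generating a Euclidean reflection group, which verifies the first bullet of Moussong's theorem.

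Next, I would apply Lemma \ref{lemma:noproduct}, whose sole hypothesis $np_2 \to 0$ is part of our assumption. It yields that \aas $\Gamma$ contains no subgraph of the form $S \sqcup T$ with $S, T$ each generating an infinite subgroup and every vertex of $S$ joined to every vertex of $T$ by a $2$-labelled edge. This is exactly the condition ruling out the second obstruction in Moussong's theorem.

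Combining both \aas statements (the intersection of two events of probability tending to $1$ still has probability tending to $1$), \aas neither forbidden configuration of Moussong's theorem appears in $\Gamma$, so \aas $W_\Gamma$ is hyperbolic. There is no real obstacle here since the technical work has been absorbed into the two preceding lemmas; the only thing to verify is the bookkeeping on hypotheses, and in particular that $np_2 \to 0$ is strong enough to simultaneously supply both $np_2 \not\to \infty$ for Lemma \ref{lemma:noaffine} and the standalone hypothesis for Lemma \ref{lemma:noproduct}.
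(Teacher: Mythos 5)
Your proposal is correct and follows exactly the same route as the paper: the paper's proof is a one-line appeal to Lemmas \ref{lemma:noaffine} and \ref{lemma:noproduct} together with Moussong's criterion, and your hypothesis bookkeeping (in particular that $np_2 \to 0$ supplies both $np_2 \not\to \infty$ and the ``one of $np_2, np_4$ tends to $0$'' clause) is the only substantive check needed.
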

 
\begin{proof}
By Lemmas \ref{lemma:noaffine} and \ref{lemma:noproduct}, 
$W_\Gamma$ \aas satisfies Moussong's conditions; hence is hyperbolic.
\end{proof}

\section{The FC-type property} \label{fc}

%In this section, graphs will be depicted using the Dynkin diagram convention (unlabelled edges
%should be interpreted as ``3''-edges and edges not shown should be interpreted as ``2'' edges). 

\begin{defn}
A Coxeter group $W_\Gamma$ is said to be of FC-type if every clique in $\Gamma$ generates a 
finite subgroup of $W_\Gamma$. 
\end{defn}

Denote by $\Gamma_3$ the (unlabelled) graph whose vertices are the vertices of $\Gamma$ and
whose edges are those edges which are labelled 3 in $\Gamma$. Note that $\Gamma_3$
can be interpreted as an Erd\"os-R\'enyi random graph with edge probability $p(n) = p_3(n)$.
We will say a graph $\Gamma'$ is a 3-labelled cycle if $\Gamma'_3$ is a cycle and 
we will say $\Gamma'$ is a 3-labelled tree if $\Gamma'_3$ is a tree. A connected 
3-labelled subgraph of $\Gamma$ is a subgraph of $\Gamma$ which is connected in $\Gamma_3$.

\begin{lemma} 
If $n^5 p_3^4 \to 0$, then $\Gamma_3$ \aas does not contain a tree on 5 vertices.
\end{lemma}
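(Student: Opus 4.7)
The plan is to apply a first-moment (Markov inequality) argument using the general subgraph-counting framework from Section 2, applied to $\Gamma_3$ viewed as an Erd\"os-R\'enyi random graph with edge probability $p_3(n)$.

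First I would observe that there are only finitely many isomorphism classes of trees on 5 vertices (in fact three: the path $P_5$, the ``Y''-tree, and the star $K_{1,4}$), and each such tree has exactly 4 edges. For each such tree $T$, applying the procedure from Section 2 with $\Gamma' = T$, the expected number of subgraphs of $\Gamma_3$ isomorphic to $T$ is asymptotic to
\begin{align*}
\frac{1}{b_T}\, n^5 p_3^4,
\end{align*}
where $b_T$ is the number of self-symmetries of $T$. Summing over the finitely many isomorphism types of 5-vertex trees yields an expected count of the form $C \cdot n^5 p_3^4$ for some constant $C$.

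Next, since $n^5 p_3^4 \to 0$ by hypothesis, this expected count tends to $0$. By Markov's inequality, the probability that $\Gamma_3$ contains at least one tree on 5 vertices is bounded above by this expectation and hence also tends to $0$. Therefore $\Gamma_3$ \aas does not contain any tree on 5 vertices.

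There is no real obstacle here; the only minor care needed is to note that we are counting \emph{subgraphs} (not induced subgraphs), so a tree on 5 vertices appearing inside a denser 5-vertex subgraph of $\Gamma_3$ still counts, and the first-moment bound remains valid because extra edges only add to the count controlled by $n^5 p_3^4$ at each fixed edge count $\geq 4$ (and one can just bound by the number of spanning trees on 5 labeled vertices, which is $5^3 = 125$ by Cayley's formula, giving an explicit constant $C \leq 125$).
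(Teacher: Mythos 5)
Your proof is correct and takes essentially the same route as the paper: a first-moment/Markov argument summing the expected number of copies of each isomorphism type of $5$-vertex tree, each of order $n^5 p_3^4 \to 0$. (Incidentally, your count of three isomorphism classes is the right one --- the paper lists four, but two of its pictured trees are isomorphic --- which affects only the constant, not the conclusion.)
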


\begin{proof}
There are four trees on $5$ vertices: 
(a) \begin{tikzpicture}
\draw[fill] (0,0) circle [radius = 0.1];
\draw[fill] (1,0) circle [radius = 0.1];
\draw[fill] (2,0) circle [radius = 0.1];
\draw[fill] (3,0) circle [radius = 0.1];
\draw[fill] (4,0) circle [radius = 0.1];
\draw (0,0) -- (1,0);
\draw (1,0) -- (2,0);
\draw (2,0) -- (3,0);
\draw (3,0) -- (4,0);
\end{tikzpicture},
(b) \begin{tikzpicture}
\draw[fill] (0,0) circle [radius = 0.1];
\draw[fill] (1,0) circle [radius = 0.1];
\draw[fill] (2,0) circle [radius = 0.1];
\draw[fill] (1,1) circle [radius = 0.1];
\draw[fill] (1,2) circle [radius = 0.1];
\draw (0,0) -- (1,0);
\draw (1,0) -- (2,0);
\draw (1,0) -- (1,1);
\draw (1,1) -- (1,2);
\end{tikzpicture},
 (c) \begin{tikzpicture}
\draw[fill] (0,1) circle [radius = 0.1];
\draw[fill] (1,0) circle [radius = 0.1];
\draw[fill] (1,1) circle [radius = 0.1];
\draw[fill] (2,1) circle [radius = 0.1];
\draw[fill] (1,2) circle [radius = 0.1];
\draw (1,0) -- (1,1);
\draw (1,1) -- (0,1);
\draw (1,1) -- (2,1);
\draw (1,1) -- (1,2);
\end{tikzpicture},
and (d) \begin{tikzpicture}
\draw [fill] (0,0) circle(0.1);
\draw [fill] (1,0) circle(0.1);
\draw [fill] (1,1) circle(0.1);
\draw [fill] (2,0) circle(0.1);
\draw [fill] (3,0) circle(0.1);
\draw (0,0) -- (3,0);
\draw (1,0) -- (1,1);
\end{tikzpicture}
The expected number of graphs isomorphic to (a) is $\frac{1}{2}n^5p_3^4$, the expected number of graphs
isomorphic to (b) is $\frac{1}{2} n^5 p_3^4$, the number of graphs isomorphic to (c) is 
$\frac{1}{24} n^5 p_3^4$, and the expected number of graphs isomorphic to (d) is $n^5 p_3^4$. 
So, the total expected number of trees on 5 vertices is 
$\frac{49}{24} n^5 p_3^4 \to 0$.
\end{proof}

\begin{lemma}
 If $np_3 \to 0$, $\Gamma_3$ \aas does not contain any cycle.
\end{lemma}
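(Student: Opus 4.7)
The plan is to apply the first-moment method to the total number of cycles in $\Gamma_3$, summing across all cycle lengths $k \geq 3$. Since each $k$-cycle is a graph on $k$ vertices with exactly $k$ edges (each labelled 3), the general framework from Section 2 tells us that the expected number of $k$-cycles in $\Gamma_3$ is asymptotically $\frac{1}{2k} n^k p_3^k$, where $2k$ is the number of automorphisms of a cycle on $k$ vertices (rotations and reflections).

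First I would use the crude bound that the number of labelled $k$-tuples of vertices is at most $n^k$, so the expected number of cycles of length $k$ in $\Gamma_3$ is bounded above by $n^k p_3^k = (np_3)^k$. Summing over all admissible lengths $3 \leq k \leq n$, and using that $np_3 \to 0$ implies $np_3 < 1$ for all sufficiently large $n$, we get
\begin{align*}
\EV(\#\text{cycles in } \Gamma_3) \leq \sum_{k=3}^n (np_3)^k \leq \frac{(np_3)^3}{1 - np_3} \to 0.
\end{align*}
Then by Markov's inequality, the probability that $\Gamma_3$ contains at least one cycle is bounded above by this expected value, which tends to $0$. Hence \aas $\Gamma_3$ contains no cycle.

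The only mildly subtle point is justifying the geometric series bound uniformly in $n$, but this is immediate once $np_3 < 1$, which holds eventually since $np_3 \to 0$. There is no real obstacle here — this is a standard subcritical Erd\H{o}s-R\'enyi calculation, parallel to the threshold results cited from \cite{bollobas} used elsewhere in the paper.
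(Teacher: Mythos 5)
Your proof is correct and follows essentially the same route as the paper: a first-moment bound on the total expected number of cycles of all lengths in $\Gamma_3$, followed by Markov's inequality. The only difference is cosmetic — you bound the sum by the geometric series $\sum_k (np_3)^k$, while the paper keeps the $\frac{1}{2k}$ automorphism factor and evaluates the resulting logarithmic series; both tend to $0$ under $np_3 \to 0$.
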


\begin{proof}
The expected number of $k$-cycles is $\frac{1}{2k}n^kp_3^k$, so the total expected number of cycles is
$\sum\limits_{k=3}^\infty \frac{1}{2k} n^k p_3^k 
= \frac{1}{2} \left(\sum\limits_{k=1}^\infty \frac{(np^3)^k}{k} - np_3 - \frac{(np_3)^2}{2}\right)
= \frac{1}{2} \left(-\ln(1-np_3) - np_3 - \frac{(np_3)^2}{2}\right) \to 0$.
\end{proof}

\begin{lemma}
 If $np_3 \to 0$ and $np_2^6 \not\to \infty$, or if $np_3 \not\to \infty$ and $np_2^6 \to 0$, then
$\Gamma$ \aas does not contain a 3-labelled tree on 5 vertices.
\end{lemma}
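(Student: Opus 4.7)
The plan is to bound the expected number of the relevant subgraphs and apply Markov's inequality. In the FC-type context, a 3-labelled tree on 5 vertices is a configuration in which five vertices span a clique of $\Gamma$ whose four 3-labelled edges form a tree and whose remaining six edges are all labelled 2, so that the induced parabolic Coxeter subgroup has Dynkin diagram equal to that tree. There are three non-isomorphic trees on 5 vertices---the path (Dynkin type $A_5$), the tree with one branch off the second vertex of a path (type $D_5$), and the star (type $\tilde D_4$)---and each gives one such configuration type, with automorphism group of order $2$, $2$, and $24$ respectively.

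First I would apply the expected-value recipe from Section 2 to each shape $T$: the configuration has $5$ vertices, $4$ edges labelled $3$, and $6$ edges labelled $2$, so the expected number of copies of it in $\Gamma$ is
\[ \frac{1}{|\operatorname{Aut}(T)|}\, n^5\, p_3^4\, p_2^6. \]
Summing over the three tree shapes gives an expected total of at most a constant multiple of $n^5\, p_3^4\, p_2^6$.

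The decisive algebraic identity is the factorization
\[ n^5\, p_3^4\, p_2^6 \;=\; (np_3)^4 \cdot (np_2^6), \]
which exactly matches the two factors appearing in the two hypothesis cases. Under Case 1 ($np_3\to 0$ and $np_2^6\not\to\infty$), the first factor tends to $0$ while the second stays bounded, so the product tends to $0$. Under Case 2 ($np_3\not\to\infty$ and $np_2^6\to 0$), the first factor is bounded and the second tends to $0$, so again the product tends to $0$. In either case Markov's inequality then yields that \aas $\Gamma$ contains no such subgraph.

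The only thing requiring care is recognizing the correct reading of ``3-labelled tree'' in this section: the four tree edges are labelled $3$ and the remaining six edges of the 5-clique are labelled $2$, so that the configuration actually realizes a parabolic subgroup with the given tree as its Dynkin diagram. Once the factor $p_2^6$ is included for these six commuting edges, the rest is a routine first-moment calculation.
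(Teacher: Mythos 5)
Your proof is correct and follows essentially the same route as the paper: a first-moment computation giving expected count $\asymp n^5p_3^4p_2^6=(np_3)^4(np_2^6)\to 0$ in either case, followed by Markov's inequality. (Your enumeration of three isomorphism types of trees on five vertices is in fact cleaner than the paper's, which lists a duplicate shape; the constant is immaterial.)
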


\begin{proof}
The expected number of 3-labelled trees on 5 vertices in $\Gamma$ is 
$\frac{49}{24} n^5 p_3^4 p_2^6 = \frac{49}{24}(np_3)^4(np_2^6) \to 0$.
\end{proof}

Let $p_B = \sum\limits_{m=4}^\infty p_m$. A ``$B$-labelled'' edge of $\Gamma$ is one labelled with any number
greater than $3$.

\begin{thm}
 If $n^3 p_B^2 \to 0$, $n^3 p_B p_3 \to 0$, and $n^5p_3^4 \to 0$, then $A_\Gamma$ is \aas of FC-type.
\end{thm}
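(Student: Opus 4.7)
The plan is to show that under the given hypotheses, the ``non-$2$ subgraph'' of $\Gamma$ (the subgraph consisting of edges labelled $\geq 3$) \aas decomposes into connected components, each of which is a finite irreducible Coxeter diagram. Since an induced subgraph of such a disjoint union is again a disjoint union of finite Coxeter diagrams, every clique in $\Gamma$ would then generate a finite subgroup of $W_\Gamma$, giving FC-type.

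First I would handle the $3$-labelled edges. The hypothesis $n^5 p_3^4 \to 0$ forces $p_3 = o(n^{-5/4})$ and hence $np_3 \to 0$, so by the cycle lemma $\Gamma_3$ \aas has no cycle, and by the preceding tree-on-$5$-vertices lemma $\Gamma_3$ \aas has no tree on $5$ vertices. Each connected component of $\Gamma_3$ is therefore a tree on at most $4$ vertices, which must be one of $A_2$, $A_3$, $A_4$, or $D_4$ (the star $K_{1,3}$)---all finite irreducible Coxeter diagrams.

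Next I would handle the $B$-labelled edges. The expected number of length-$2$ paths in $\Gamma$ with both edges $B$-labelled is of order $n^3 p_B^2 \to 0$, so \aas no two $B$-edges share a vertex; similarly the expected number of length-$2$ paths with one $B$-edge and one $3$-edge is of order $n^3 p_B p_3 \to 0$, so \aas no $B$-edge meets a $3$-edge. Consequently every $B$-edge sits as an isolated edge in the non-$2$ subgraph, contributing a finite dihedral component $I_2(m)$ (with $m$ the label of that edge).

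Combining, the non-$2$ subgraph of $\Gamma$ \aas has components only of type $I_2(m)$, $A_2$, $A_3$, $A_4$, or $D_4$. For any clique $K$ of $\Gamma$, its non-$2$ subgraph is the induced subgraph on the vertices of $K$ of this decomposition; a short case check confirms that an induced subgraph of $A_n$ is a disjoint union of smaller $A_j$'s, an induced subgraph of $I_2(m)$ is trivial or $I_2(m)$ itself, and an induced subgraph of $D_4$ is a disjoint union of $A_1$'s, $A_2$'s, $A_3$'s, or $D_4$. Every such induced subgraph is a disjoint union of finite Coxeter diagrams, so every clique generates a finite parabolic subgroup of $W_\Gamma$. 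The main obstacle is the closure-under-induced-subgraphs step, but it reduces to a finite enumeration over the short list of component types; the probabilistic content is really concentrated in the three expected-count estimates above.
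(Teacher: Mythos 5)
Your proof is correct and follows essentially the same route as the paper's: you use $n^5p_3^4\to 0$ (hence $np_3\to 0$) to reduce the $3$-labelled components to trees on at most $4$ vertices, and the two first-moment bounds $n^3p_B^2\to 0$ and $n^3p_Bp_3\to 0$ to isolate the $B$-labelled edges, so that every component of the non-$2$ subgraph is a finite Coxeter diagram. Your explicit verification that this property passes to induced subgraphs (and hence to cliques) fills in a step the paper leaves implicit, but it is not a different argument.
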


\begin{proof}
Since $n^5 p_3^4 \to 0$, the connected 3-labelled subgraphs of $\Gamma$ are all associated to finite Coxeter
groups by the previous theorem. The expected number of adjacent pairs of $B$-labelled edges is 
$\frac{1}{2} n^3 p_B^2 \to 0$, and the expected number of 
adjacent pairs of edges with one labelled $B$ and the other labelled $3$ is $\frac{1}{2}n^3 p_B p_3 \to 0$. So, 
$B$-labelled edges \aas do not appear adjacent to any 
edge labelled $m$ with $m \geq 3$. Hence, the only connected $3$-and-$B$-labelled subgraphs which 
appear with positive probability are 3-labelled trees on at most 4 vertices whose other edges in 
$\Gamma$ are all labelled 2, as well as $B$-labelled
edges. Since each of these is associated to a finite Coxeter group, $A_\Gamma$ is \aas of FC-type.
\end{proof}

\begin{thm}
 If $np_B \to 0$ and $np_2 \not\to \infty$ and $np_3 \to 0$, then $A_\Gamma$ is \aas of FC-type.
\end{thm}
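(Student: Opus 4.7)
The strategy is to prove the stronger statement that asymptotically almost surely every clique of $\Gamma$ of size at least three has all its edges labeled $2$. Combined with the facts that a single vertex generates $\mathbb{Z}/2\mathbb{Z}$ and a single (finitely-labeled) edge generates a finite dihedral group, this forces every clique in $\Gamma$ to generate a finite subgroup, hence gives FC-type. (A clique of size $\geq 3$ with all $2$-edges generates $(\mathbb{Z}/2\mathbb{Z})^k$, which is finite.)

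The first step is to reduce the problem to triangles: if some clique of size $k\geq 3$ carried a non-$2$ edge $e$, then $e$ together with any third vertex of that clique would form a triangle also carrying the non-$2$ edge $e$. So it suffices to show that \aas $\Gamma$ contains no triangle with a non-$2$ edge.

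For this I would apply Markov's inequality after a short first-moment calculation. Let $p = p_2+p_3+p_B$ denote the total edge probability. The probability that a fixed triple of vertices spans a triangle with at least one non-$2$ edge is
\begin{align*}
p^3 - p_2^3 \;=\; (p_3+p_B)\bigl(p^2 + p\,p_2 + p_2^2\bigr) \;\leq\; 3\,p^2(p_3+p_B).
\end{align*}
Hence the expected number of such bad triangles is at most $\tfrac{n^3}{6}\cdot 3p^2(p_3+p_B) = \tfrac{1}{2}(np)^2\cdot n(p_3+p_B)$. Under the hypotheses, $np = np_2+np_3+np_B$ is bounded (since $np_2\not\to\infty$ and $np_3, np_B\to 0$), so $(np)^2$ is bounded, while $n(p_3+p_B) = np_3+np_B\to 0$. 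Therefore the expected count tends to $0$, and Markov's inequality gives that \aas no such triangle exists.

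The main subtlety, as elsewhere in the paper, is reading $np_2\not\to\infty$ as an eventual bound $np_2\leq M$ so that $(np)^2$ is uniformly controlled; granting this, the argument is a one-line moment estimate, lighter than the structural analysis of $3$-labelled trees and $B$-$3$/$B$-$B$ adjacencies used in the preceding theorem, and it exploits the new hypothesis $np_2\not\to\infty$ precisely where the preceding theorem had no constraint on $p_2$ at all.
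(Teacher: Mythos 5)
Your argument is correct and takes a genuinely different, leaner route than the paper. The paper controls the full connected $3$-and-$B$-labelled structure of $\Gamma$: it runs a case analysis over triangle label patterns ($B,B,B$; $B,B,3$; $B,B,2$; $B,3,3$; $B,3,2$) to show $B$-edges are isolated from other non-$2$ edges, then invokes the earlier lemmas on $3$-labelled cycles and trees to conclude every connected non-$2$-labelled piece is a finite-type diagram. You instead observe that FC-type only sees what happens \emph{inside cliques}, reduce to triangles, and kill every triangle containing a non-$2$ edge with the single first-moment bound $\tfrac{1}{2}(np)^2\, n(p_3+p_B)\to 0$; this subsumes all of the paper's cases at once (a $3$-labelled tree or cycle sitting inside a clique would in particular put a $3$-edge into some triangle, and your conclusion is in fact stronger: \aas every clique of size at least three is entirely $2$-labelled). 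A small bonus of your reduction is that you never have to reason about configurations whose third edge is absent, which the paper's intermediate claim that there is ``no adjacent pair of $B$-labelled edges'' quietly glosses over (harmlessly, since such configurations lie in no clique). Two caveats, both of which you flagged and both of which the paper's own proof shares: reading $np_2\not\to\infty$ as an eventual bound $np_2\le M$ (strictly it only yields a bounded subsequence, and the paper's step $(np_B)^2(np_2)\to 0$ needs the same reading), and assuming all edge labels are finite so that a $2$-clique generates a finite dihedral group --- an $\infty$-labelled edge would already defeat FC-type, and the paper makes the identical tacit assumption when it declares a lone $B$-labelled edge to be of finite type.
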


\begin{proof}
The expected number of triangles with edges labelled $B,B,B$ is $\frac{1}{6}n^3p_B^3 = (np_B)^3 \to 0$. The expected 
number of triangles with edges labelled $B,B,3$ is $\frac{1}{2} n^3 p_B^2 p_3 = \frac{1}{2}(np_B)^2(np_3) \to 0$.
The expected number of triangles with edges labelled $B,B,2$ is 
$\frac{1}{2} n^3 p_B^2 p_2 = \frac{1}{2}(np_B)^2(np_2) \to 0$. So, $\Gamma$ \aas does not contain any adjacent 
pair of $B$-labelled edges. The expected number of triangles whose edges are 
labelled $B,3,3$ is $\frac{1}{2} n^3 p_B p_3^2 = \frac{1}{2} (np_B)(np_3)^2 \to 0$. The expected number of 
triangles whose edges are labelled $B,3,2$ is $n^3 p_B p_3 p_2 = (np_B)(np_3)(np_2) \to 0$. So, $\Gamma$ \aas does
not contain any adjacent pairs of edges with one labelled $B$ and the other labelled $3$. So, the only connected 
3-and-$B$-labelled subgraphs which appear with positive probability in $\Gamma$ are those listed in the previous 
theorem.
Since each of these corresponds to a finite Coxeter group, $A_\Gamma$ is \aas of FC-type.
\end{proof}

\begin{thm}
 If $np_3 \to \infty$, $A_\Gamma$ is \aas not of FC-type
\end{thm}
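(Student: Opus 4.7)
The plan is to exhibit, asymptotically almost surely, a clique in $\Gamma$ that generates an infinite subgroup of $W_\Gamma$. The most natural candidate is a triangle with all three edges labelled $3$, since this generates the affine Coxeter group $\tilde{A}_2$, which is infinite. Thus it suffices to prove that when $np_3 \to \infty$, the graph $\Gamma$ a.a.s.\ contains a $3$-labelled triangle.

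To do this, I would mimic the argument used in the previous hyperbolicity theorem for the case $np_3 \to \infty$: consider the subgraph $\Gamma_3$ of $\Gamma$ whose edges are exactly the $3$-labelled edges. Then $\Gamma_3$ is an Erd\H{o}s--R\'enyi random graph with edge probability $p_3(n)$, and a $3$-labelled triangle in $\Gamma$ is the same thing as a triangle in $\Gamma_3$. By the standard result on Erd\H{o}s--R\'enyi graphs (cf.\ \cite{bollobas}), whenever $np \to \infty$ the random graph a.a.s.\ contains a triangle, so $\Gamma_3$ a.a.s.\ contains a triangle. If one preferred a self-contained argument, the second moment method would work: letting $X$ count $3$-labelled triangles, $\EV(X) \sim \frac{1}{6}(np_3)^3 \to \infty$, and a routine computation gives $\EV(X^2)/\EV(X)^2 \to 1$ in the same style as the earlier lemmas.

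Once a $3$-labelled triangle is produced, the conclusion is immediate: its three vertices form a clique in $\Gamma$, and the parabolic subgroup they generate is the Coxeter group with Coxeter matrix having all off-diagonal entries equal to $3$, namely $\tilde{A}_2$. This group is infinite (it is one of the Euclidean reflection groups in the table in the Appendix), so $W_\Gamma$ contains a clique generating an infinite subgroup. Hence $W_\Gamma$ is a.a.s.\ not of FC-type.

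There is no real obstacle here; the only point to be slightly careful about is invoking the right ``triangle threshold'' result for Erd\H{o}s--R\'enyi graphs, since all the ingredients (interpretation of $\Gamma_3$ as Erd\H{o}s--R\'enyi, identification of the $\tilde{A}_2$ subgroup, and the existence of a triangle at this density) have already appeared in earlier sections of the paper and can be reused verbatim.
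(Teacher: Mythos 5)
Your proposal is correct and follows essentially the same route as the paper: both reduce the problem to showing that $\Gamma$ a.a.s.\ contains a $3$-labelled triangle (which generates the infinite group $\tilde{A}_2$), and the paper establishes this via exactly the second-moment computation you sketch as your alternative. Your primary suggestion of citing the Erd\H{o}s--R\'enyi triangle threshold for $\Gamma_3$ is the same argument the paper itself uses for the analogous hyperbolicity theorem, so nothing is missing.
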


\begin{proof}
For every 3-tuple $\alpha = (v_1,v_2,v_3)$ of vertices in $\Gamma$, let $X_\alpha$ be the random variable which
takes the value 1 if $\alpha$ spans a 3-labelled triangle, and takes the value 0 otherwise. Let 
$X = \sum\limits_\alpha X_\alpha$ (so the expected number of 3-labelled triangles in $\Gamma$ is 
$\frac{1}{6} \EV(X)$). Then $\EV(X) = n^3 p_3^3$, so $\EV(X)^2 = n^6p_3^6$. 
$\EV(X^2) = b_{0,0} n^6p_3^6 + b_{1,0} n^5 p_3^6 + b_{2,1} n^4 p_3^5 + b_{3,3} n^3 p_3^3$,
where $b_{i,j}$ is the number of ways two 3-tuples can share $i$ vertices and $j$ 3-edges. 
%CLARIFY THIS IN EXPOSITION
So, $\frac{\EV(X^2)}{\EV(X)^2} = b_{0,0} + \frac{b_{1,0}}{n} + \frac{b_{2,1}}{n^2p_3} + \frac{b_{3,3}}{n^3p_3^3}$.
This $\to 1$ since $b_{0,0} = 1$, $n \to \infty$, $n^2p_3 = n(np_3) \to \infty$, and $n^3 p_3^3 = (np_3)^3 \to \infty$.
\end{proof}

\begin{thm}
 If for $k = 2$ or $3$, $np_B$ and $np_k$ both $\not\to 0$ and one of them $\to \infty$, then 
$A_\Gamma$ is \aas not of FC-type.
\end{thm}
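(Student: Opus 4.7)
The plan is to exhibit, in each case, a specific edge-labelled triangle whose vertex set generates an infinite Coxeter subgroup of $W_\Gamma$, and then apply the second moment method to show $\Gamma$ \aas contains such a triangle. Since a triangle is a clique, this at once witnesses that $W_\Gamma$ is not of FC-type. The essential ingredient is the classification of finite rank-$3$ Coxeter groups: a triangle with edge labels $(p,q,r)$ generates a finite subgroup iff $\tfrac{1}{p}+\tfrac{1}{q}+\tfrac{1}{r} > 1$, so the only finite label triples are $(2,2,n)$, $(2,3,3)$, $(2,3,4)$, and $(2,3,5)$.

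For $k=3$ I would target triangles with labels $(3,3,m)$ for some $m \geq 4$: since $\tfrac{2}{3}+\tfrac{1}{m} < 1$, this group is infinite. Let $X$ count ordered triples $(v_1,v_2,v_3)$ with $(v_1,v_2)$ and $(v_1,v_3)$ labelled $3$ and $(v_2,v_3)$ $B$-labelled, so $\EV(X) \sim n^3 p_3^2 p_B = (np_3)^2(np_B)$. The hypothesis that one of $np_3, np_B$ tends to $\infty$ while the other is bounded away from $0$ forces $\EV(X) \to \infty$. For $k=2$ I would instead target $(2,m_1,m_2)$ with $m_1,m_2 \geq 4$, giving $\tfrac{1}{2}+\tfrac{1}{m_1}+\tfrac{1}{m_2}\le 1$ and hence an infinite group, and $\EV(X) \sim n^3 p_2 p_B^2 = (np_2)(np_B)^2 \to \infty$ by the same reasoning.

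For the second moment I would mirror the calculations carried out earlier in Section \ref{hyp} (e.g.\ the $(6,3,2)$-triangle proof). Writing
\begin{align*}
\frac{\EV(X^2)}{\EV(X)^2} = 1 + \sum \frac{b_{\ldots}}{n^{j}\, Q(p_2,p_3,p_B)},
\end{align*}
where the sum runs over the overlap patterns of two copies of the target triangle with $j \in \{1,2\}$ shared vertices, each denominator rearranges as $n^{j-e}\prod (np_*)^{\,\cdot}$ where $e$ is the number of shared edges, and every $np_*$ that appears corresponds to an edge used in the target triangle and is therefore bounded away from $0$ by hypothesis. Hence $\EV(X^2)/\EV(X)^2 \to 1$ and Chebyshev's inequality yields $X > 0$ \aas.

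The main obstacle is the bookkeeping in the second moment: I must check that in every overlap contribution some genuine positive power of $n$ survives after cancellation. This is immediate because two triangles can overlap in at most $2$ vertices and hence share strictly fewer edges than the triangle contains, leaving at least one uncanceled $n$-factor on top of the bounded-below $np_*$ quantities. Consequently $\Gamma$ \aas contains an infinite-subgroup-inducing triangle, and $W_\Gamma$ is \aas not of FC-type.
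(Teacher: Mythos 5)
Your approach is essentially the paper's: choose an edge-labelled triangle whose labels violate the spherical condition $\tfrac{1}{p}+\tfrac{1}{q}+\tfrac{1}{r}>1$, observe that its expected count is a product of the quantities $np_B$ and $np_k$ controlled by the hypotheses, and run a second-moment argument. (The paper uses a $(B,B,k)$-labelled triangle for both $k=2$ and $k=3$; your $(3,3,B)$ choice for $k=3$ works equally well.) There is, however, one concrete slip in your verification that $\EV(X^2)/\EV(X)^2 \to 1$: you claim that two contributing triangles can share at most $2$ vertices, so that every correction term retains an uncancelled positive power of $n$. That is false. The sum defining $\EV(X^2)$ runs over all pairs of tuples, including pairs sharing all three vertices (in particular the diagonal), and the corresponding term is of order $1/(n^3 p_{e_1}p_{e_2}p_{e_3}) = 1/\EV(X)$, with no leftover factor of $n$; the paper records it as $b_{3,2,1}/(n^3p_B^2p_k)$. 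This term tends to $0$ only because $\EV(X)\to\infty$, which is precisely where the hypothesis that \emph{one} of $np_B$, $np_k$ tends to infinity (rather than both merely being bounded away from $0$) is indispensable. Since you have already established $\EV(X)\to\infty$, the repair is immediate, but as written your stated justification fails to cover the full-overlap term, which is the one that actually uses the full strength of the hypothesis.
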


\begin{proof}
For every 3-tuple $\alpha = (v_1,v_2,v_3)$, let $X_\alpha$ be the random variable which takes the value 1 if 
edges $(v_1,v_2)$ and $(v_2,v_3)$ are labelled $B$ and edge $(v_3,v_1)$ is labelled $k$, and takes the value 0 otherwise. 
Let $X = \sum\limits_{\alpha} X_\alpha$ (so the expected number of triangles with two $B$-labelled edges
and one $k$-labelled edge is $\frac{1}{2} \EV(X)$). Then $\EV(X) = n^3 p_B^2 p_k$, so 
$\EV(X)^2 = n^6 p_B^4 p_k^2$. 
$\EV(X^2) = b_{0,0,0} n^6 p_B^4 p_k^2 + b_{1,0,0} n^5 p_B^4 p_k^2 + b_{2,1,0} n^4 p_B^3 p_k^2
+ b_{2,0,1} n^4 p_B^4 p_k + b_{3,2,1} n^3p_B^2 p_k$,
where $b_{i,j,m}$ is the number of ways two of these triangles can share $i$ vertices, $j$ $B$-edges, and $m$
$k$-edges.
So, $\frac{\EV(X^2)}{\EV(X)^2} = b_{0,0,0} + \frac{b_{1,0,0}}{n} + \frac{b_{2,1,0}}{n^2p_B}
+ \frac{b_{2,0,1}}{n^2p_k} + \frac{b_{3,2,1}}{n^3p_B^2 p_k} \to 1$ since
$b_{0,0,0} = 1$, $n \to \infty$, $n^2p_B = n(np_B) \to \infty$, $n^2 p_k = n(np_k) \to \infty$, 
and $n^3p_B^2 p_k = (np_B)^2(np_k) \to \infty$.
\end{proof}

\section{Appendix: Coxeter diagrams of irreducible finite and Euclidean reflection groups}

In the following tables, we use the Dynkin diagram convention, so unlabelled edges
should be interpreted as 3-labelled in the defining graph and missing edges should be interpreted as
2-labelled in the defining graph. 

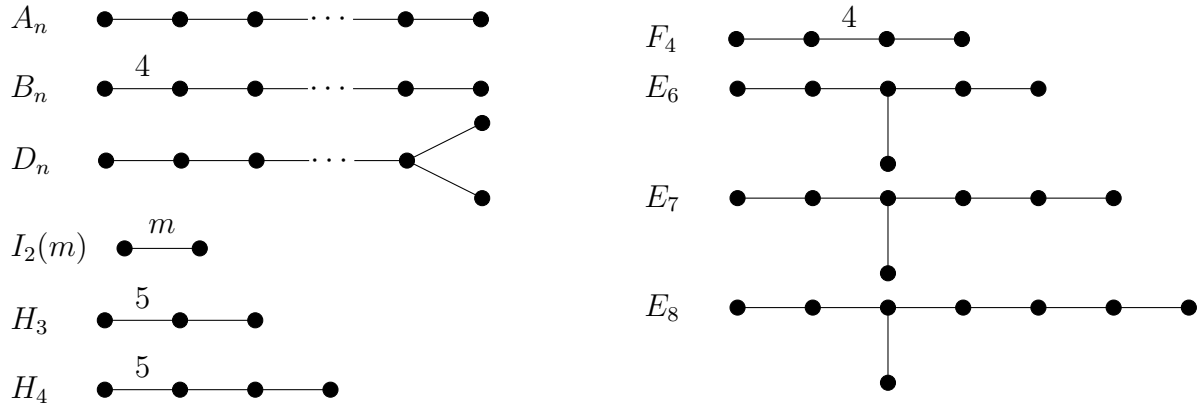
\begin{figure}[H] 

\begin{multicols}{2}
\begin{tikzpicture}
	
	\draw (0,0) -- (2.7,0);
	\draw (3.3, 0) -- (5,0);
	
	\draw [fill] (0,0) circle (.1);
	\draw [fill] (1,0) circle (.1);
	\draw [fill] (2,0) circle (.1);
	\draw [fill] (4,0) circle (.1);
	\draw [fill] (5,0) circle (.1);
	
	\node at (3,0) {$\cdots$};
	\node at (-1,0) {$A_n$};
	 
\end{tikzpicture}

\begin{tikzpicture}

	\draw (0,0) -- (2.7,0);
	\draw (3.3, 0) -- (5,0);
	
	\draw [fill] (0,0) circle(.1);
	\draw [fill] (1,0) circle(.1);
	\draw [fill] (2,0) circle(.1);
	\draw [fill] (4,0) circle(.1);
	\draw [fill] (5,0) circle(.1);
	
	\node at (3,0) {$\cdots$};
	\node at (.5,.3) {4};
	\node at (-1,0) {$B_n$};
	 
\end{tikzpicture}

\begin{tikzpicture}

	\draw (0,0) -- (2.7,0);
	\draw (3.3, 0) -- (4,0);
	\draw (4,0) -- (5,-.5);
	\draw (4,0) -- (5,.5);
	
	\draw [fill] (0,0) circle(.1);
	\draw [fill] (1,0) circle(.1);
	\draw [fill] (2,0) circle(.1);
	\draw [fill] (4,0) circle(.1);
	\draw [fill] (5,-.5) circle(.1);
	\draw [fill] (5,.5) circle(.1);
	
	\node at (3,0) {$\cdots$};
	\node at (-1,0) {$D_n$};
	 
\end{tikzpicture}

\begin{tikzpicture}

	\draw (0,0) -- (1,0);
	
	\draw [fill] (0,0) circle(.1);
	\draw [fill] (1,0) circle(.1);
	
	\node at (.5, .3) {$m$};
	\node at (-1,0) {$I_2(m)$};
	 
\end{tikzpicture}

\begin{tikzpicture}

	\draw (0,0) -- (2,0);
	
	\draw [fill] (0,0) circle(.1);
	\draw [fill] (1,0) circle(.1);
	\draw [fill] (2,0) circle(.1);
	
	\node at (.5, .3) {5};
	\node at (-1,0) {$H_3$};
	 
\end{tikzpicture}

\begin{tikzpicture}

	\draw (0,0) -- (3,0);
	
	\draw [fill] (0,0) circle(.1);
	\draw [fill] (1,0) circle(.1);
	\draw [fill] (2,0) circle(.1);
	\draw [fill] (3,0) circle(.1);
	
	\node at (.5, .3) {5};
	\node at (-1,0) {$H_4$};
	 
\end{tikzpicture}

\begin{tikzpicture}

	\draw (0,0) -- (3,0);
	
	\draw [fill] (0,0) circle(.1);
	\draw [fill] (1,0) circle(.1);
	\draw [fill] (2,0) circle(.1);
	\draw [fill] (3,0) circle(.1);
	
	\node at (1.5, .3) {4};
	\node at (-1,0) {$F_4$};
	 
\end{tikzpicture}

\begin{tikzpicture}

	\draw (0,0) -- (4,0);
	\draw (2,0) -- (2,-1);
	
	\draw [fill] (0,0) circle(.1);
	\draw [fill] (1,0) circle(.1);
	\draw [fill] (2,0) circle(.1);
	\draw [fill] (2,-1) circle(.1);
	\draw [fill] (3,0) circle(.1);
	\draw [fill] (4,0) circle(.1);
	
	\node at (-1,0) {$E_6$};
	 
\end{tikzpicture}

\begin{tikzpicture}

	\draw (0,0) -- (5,0);
	\draw (2,0) -- (2,-1);
	
	\draw [fill] (0,0) circle(.1);
	\draw [fill] (1,0) circle(.1);
	\draw [fill] (2,0) circle(.1);
	\draw [fill] (2,-1) circle(.1);
	\draw [fill] (3,0) circle(.1);
	\draw [fill] (4,0) circle(.1);
	\draw [fill] (5,0) circle(.1);
	
	\node at (-1,0) {$E_7$};
	 
\end{tikzpicture}

\begin{tikzpicture}

	\draw (0,0) -- (6,0);
	\draw (2,0) -- (2,-1);
	
	\draw [fill] (0,0) circle(.1);
	\draw [fill] (1,0) circle(.1);
	\draw [fill] (2,0) circle(.1);
	\draw [fill] (2,-1) circle(.1);
	\draw [fill] (3,0) circle(.1);
	\draw [fill] (4,0) circle(.1);
	\draw [fill] (5,0) circle(.1);
	\draw [fill] (6,0) circle(.1);
	
	\node at (-1,0) {$E_8$};
	 
\end{tikzpicture}

\end{multicols}

\caption{Coxeter diagrams for irreducible finite Coxeter groups}
\label{fig:finitetable}

\end{figure}

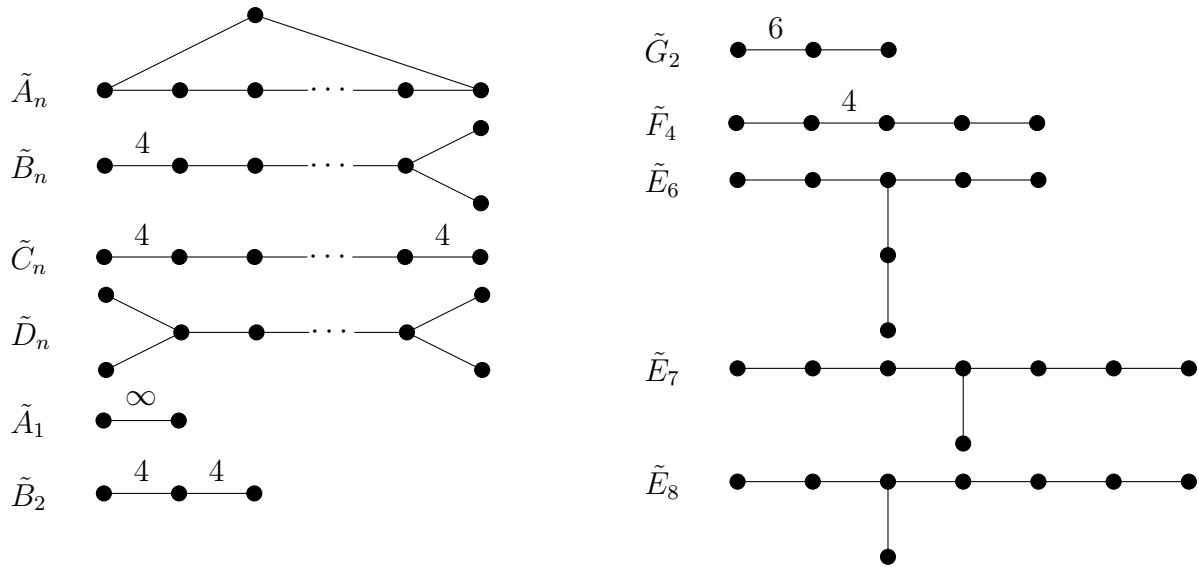
\begin{figure}[H]

\begin{multicols}{2}

\begin{tikzpicture}
	
	\draw (0,0) -- (2.7,0);
	\draw (3.3, 0) -- (5,0);
	\draw (0,0) -- (2,1);
	\draw (2,1) -- (5,0);
	
	\draw [fill] (0,0) circle (.1);
	\draw [fill] (1,0) circle (.1);
	\draw [fill] (2,0) circle (.1);
	\draw [fill] (4,0) circle (.1);
	\draw [fill] (5,0) circle (.1);
	\draw [fill] (2,1) circle (.1);
	
	\node at (3,0) {$\cdots$};
	\node at (-1,0) {$\tilde{A}_n$};
	 
\end{tikzpicture}

\begin{tikzpicture}

	\draw (0,0) -- (2.7,0);
	\draw (3.3, 0) -- (4,0);
	\draw (4,0) -- (5,-.5);
	\draw (4,0) -- (5,.5);
	
	\draw [fill] (0,0) circle(.1);
	\draw [fill] (1,0) circle(.1);
	\draw [fill] (2,0) circle(.1);
	\draw [fill] (4,0) circle(.1);
	\draw [fill] (5,-.5) circle(.1);
	\draw [fill] (5,.5) circle(.1);
	
	\node at (3,0) {$\cdots$};
	\node at (.5,.3) {$4$};
	\node at (-1,0) {$\tilde{B}_n$};
	 
\end{tikzpicture}

\begin{tikzpicture}

	\draw (0,0) -- (2.7,0);
	\draw (3.3, 0) -- (5,0);
	
	\draw [fill] (0,0) circle(.1);
	\draw [fill] (1,0) circle(.1);
	\draw [fill] (2,0) circle(.1);
	\draw [fill] (4,0) circle(.1);
	\draw [fill] (5,0) circle(.1);
	
	\node at (3,0) {$\cdots$};
	\node at (4.5,.3) {4};
	\node at (.5,.3) {4};
	\node at (-1,0) {$\tilde{C}_n$};
	 
\end{tikzpicture}

\begin{tikzpicture}

	\draw (1,0) -- (2.7,0);
	\draw (1,0) -- (0,-.5);
	\draw (1,0) -- (0,.5);
	\draw (3.3, 0) -- (4,0);
	\draw (4,0) -- (5,-.5);
	\draw (4,0) -- (5,.5);
	
	\draw [fill] (0,-.5) circle(.1);
	\draw [fill] (0,.5) circle(.1);
	\draw [fill] (1,0) circle(.1);
	\draw [fill] (2,0) circle(.1);
	\draw [fill] (4,0) circle(.1);
	\draw [fill] (5,-.5) circle(.1);
	\draw [fill] (5,.5) circle(.1);
	
	\node at (3,0) {$\cdots$};
	\node at (-1,0) {$\tilde{D}_n$};
	 
\end{tikzpicture}

\begin{tikzpicture}

	\draw (0,0) -- (1,0);
	
	\draw [fill] (0,0) circle(.1);
	\draw [fill] (1,0) circle(.1);
	
	\node at (.5, .3) {$\infty$};
	\node at (-1,0) {$\tilde{A}_1$};
	 
\end{tikzpicture}

\begin{tikzpicture}

	\draw (0,0) -- (2,0);
	
	\draw [fill] (0,0) circle(.1);
	\draw [fill] (1,0) circle(.1);
	\draw [fill] (2,0) circle(.1);
	
	\node at (.5, .3) {4};
	\node at (1.5,.3) {4};
	\node at (-1,0) {$\tilde{B}_2$};
	 
\end{tikzpicture}

\begin{tikzpicture}

	\draw (0,0) -- (2,0);
	
	\draw [fill] (0,0) circle(.1);
	\draw [fill] (1,0) circle(.1);
	\draw [fill] (2,0) circle(.1);
	
	\node at (.5, .3) {6};
	\node at (-1,0) {$\tilde{G}_2$};
	 
\end{tikzpicture}

\begin{tikzpicture}

	\draw (0,0) -- (4,0);
	
	\draw [fill] (0,0) circle(.1);
	\draw [fill] (1,0) circle(.1);
	\draw [fill] (2,0) circle(.1);
	\draw [fill] (3,0) circle(.1);
	\draw [fill] (4,0) circle(.1);
	
	\node at (1.5, .3) {4};
	\node at (-1,0) {$\tilde{F}_4$};
	 
\end{tikzpicture}

\begin{tikzpicture}

	\draw (0,0) -- (4,0);
	\draw (2,0) -- (2,-2);
	
	\draw [fill] (0,0) circle(.1);
	\draw [fill] (1,0) circle(.1);
	\draw [fill] (2,0) circle(.1);
	\draw [fill] (3,0) circle(.1);
	\draw [fill] (4,0) circle(.1);
	\draw [fill] (2,-1) circle(.1);
	\draw [fill] (2,-2) circle(.1);
	
	\node at (-1,0) {$\tilde{E}_6$};
	 
\end{tikzpicture}

\begin{tikzpicture}

	\draw (0,0) -- (6,0);
	\draw (3,0) -- (3,-1);
	
	\draw [fill] (0,0) circle(.1);
	\draw [fill] (1,0) circle(.1);
	\draw [fill] (2,0) circle(.1);
	\draw [fill] (3,0) circle(.1);
	\draw [fill] (4,0) circle(.1);
	\draw [fill] (5,0) circle(.1);
	\draw [fill] (6,0) circle(.1);
	\draw [fill] (3,-1) circle(.1);	

	\node at (-1,0) {$\tilde{E}_7$};

\end{tikzpicture}

\begin{tikzpicture}

	\draw (0,0) -- (6,0);
	\draw (2,0) -- (2,-1);
	
	\draw [fill] (0,0) circle(.1);
	\draw [fill] (1,0) circle(.1);
	\draw [fill] (2,0) circle(.1);
	\draw [fill] (3,0) circle(.1);
	\draw [fill] (4,0) circle(.1);
	\draw [fill] (5,0) circle(.1);
	\draw [fill] (6,0) circle(.1);
	\draw [fill] (2,-1) circle(.1);	

	\node at (-1,0) {$\tilde{E}_8$};

\end{tikzpicture}

\end{multicols}

\caption{Coxeter diagrams for Euclidean reflection groups}
\label{fig:affinetable}

\end{figure}

\bibliographystyle{plain}
\bibliography{references}

\end{document}